\documentclass[11pt,reqno]{amsart}
\usepackage{indentfirst, amssymb,amsmath,amsthm, mathrsfs,cite,graphicx,float}
\usepackage{tikz}
\usepackage{float}
\usetikzlibrary{arrows.meta} 
\newtheorem*{theoA}{Theorem A}
\newtheorem*{theoB}{Theorem B}
\newtheorem*{theoC}{Theorem C}
\newtheorem*{theoD}{Theorem D}
\newtheorem*{theoE}{Theorem E}
\newtheorem*{theoF}{Theorem F}

\newtheorem{theo}{Theorem}[section]
\newtheorem{lem}{Lemma}[section]

\newtheorem{cor}{Corollary}[section]

\newtheorem{rem}{Remark}[section]
\newtheorem{exam}{Example}[section]

\newtheorem{open problem}{Open problem}[section]

\newcommand{\ol}{\overline}
\newcommand{\be}{\begin{equation}}
\newcommand{\ee}{\end{equation}}
\newcommand{\bs}{\begin{small}}
\newcommand{\es}{\end{small}}
\newcommand{\beas}{\begin{eqnarray*}}
\newcommand{\eeas}{\end{eqnarray*}}
\newcommand{\bea}{\begin{eqnarray}}
\newcommand{\eea}{\end{eqnarray}}
\renewcommand{\epsilon}{\varepsilon}
\numberwithin{equation}{section}
\begin{document}
\title[Landau-type theorems]{Landau-type theorems for $K$-quasiregular harmonic mappings}
\author[V. Allu and R. Biswas]{Vasudevarao Allu and Raju Biswas}
\date{}
\address{Vasudevarao Allu, Department of Mathematics, School of Basic Science, Indian Institute of Technology Bhubaneswar, Bhubaneswar-752050, Odisha, India.}
\email{avrao@iitbbs.ac.in}
\address{Raju Biswas, Department of Mathematics, Raiganj University, Raiganj, West Bengal-733134, India.}
\email{rajubiswasjanu02@gmail.com}
\maketitle
\let\thefootnote\relax
\footnotetext{2020 Mathematics Subject Classification: 31A30, 30C99.}
\footnotetext{Key words and phrases: Landau-type theorem, Bloch theorem, $K$-quasiregular harmonic mappings, univalent functions, Bloch space.}
\begin{abstract} 
In this paper, our aim is to establish several sharp and improved Landau-type theorems for $K$-quasiregular harmonic mappings $f=h+\overline{g}$ in the unit disk $\Bbb{D} = \{z\in\Bbb{C}: |z|<1\}$. 
Under various boundedness assumptions on the analytic part $h$ or its derivative, we obtain explicit univalence radii and corresponding schlicht disk radii that 
significantly improve upon existing estimates in the literature. We also establish new Landau-type theorems under novel  
hypotheses. We provide examples to illustrate our results, and comprehensive numerical tables present quantitative values of the radii for various parameter choices, demonstrating the effectiveness of our results.

\end{abstract}
\section{Introduction and Preliminaries}
Let $\Bbb{D}_r(a):=\{z\in\Bbb{C} : |z-a|<r\}$ denote the disk of radius $r>0$ with centre at the point $a\in\Bbb{C}$. In particular, $\Bbb{D}_r:=\Bbb{D}_r(0)$ and 
$\Bbb{D}:=\Bbb{D}_1(0)$ be the unit disk. Let $A(\Bbb{D})$ denote the class of all analytic functions in $\Bbb{D}$.
A complex-valued function $f$ is said to be harmonic in a domain $\Omega\subseteq \Bbb{C}$ if $f$ is twice continuously differentiable and satisfies the Laplace equation $\Delta f:=4f_{z\overline{z}}=0$ in $\Omega$. 
A harmonic mapping $f$ in a simply connected domain $\Omega$ can be represented as $f = h + \overline{g}$, where $h$ and $g$ are analytic functions in $\Omega$ (see \cite{D2004}). 
For a continuously differentiable function $f$, the maximum and minimum distortions (see \cite{CGH2000}) are defined as follows: 
\beas \Lambda_f(z) &=& \max_{0\leq \theta \leq 2\pi} \left|f_z+e^{-2i\theta} f_{\ol{z}}\right|= |f_z(z)| + |f_{\bar{z}}(z)|\quad \text{and}\\[2mm] 
\lambda_f(z)& =& \min_{0\leq \theta \leq 2\pi} \left|f_z+e^{-2i\theta} f_{\ol{z}}\right|=\left| |f_z(z)| - |f_{\bar{z}}(z)| \right|,\eeas
where $f_z = (1/2)(f_x - i f_y)$ and $f_{\bar{z}} = (1/2)(f_x + i f_y)$. 
The Jacobian is given by $J_f(z) =|f_z(z)|^2-|f_{\ol{z}}(z)|^2= |h'(z)|^2 - |g'(z)|^2$, and note that $|J_f(z)|=\Lambda_f(z) \lambda_f(z)$. The inverse function theorem and a 
result of Lewy \cite{L1936} demonstrates that a harmonic function $f$ is locally univalent in $\Omega$ if, and only if, its Jacobian $J_f(z)\not=0$ in $\Omega$.
Furthermore, a harmonic 
mapping is said to be sense-preserving if $J_f > 0$. A sense-preserving harmonic mapping is called $K$-quasiregular ($K \geq 1$) in $\Bbb{D}$
if $\Lambda_f(z) \leq K \lambda_f(z)$ for all $z \in \Bbb{D}$.
 We say that a disk $\Bbb{D}_\rho(a)$ is a schlicht disk of $f$ if there is a subdomain $\Omega\subset \Bbb{D}$ such that $f$ is univalent in $\Omega$ and $f(\Omega)=\Bbb{D}_\rho(a)$.
\subsection{The classical Bloch Theorem and Landau Theorem} 
In 1925, Bloch \cite{B1925} discovered a covering theorem for non-univalent analytic functions in the unit disk $\Bbb{D}$ requiring only the condition $f'(0)=1$. Thus, the classical Bloch theorem states that if $f$ is 
analytic in the unit disk $\Bbb{D}$ with $f'(0) = 1$, then the image $f(\Bbb{D})$
necessarily contains a schlicht disk of some positive radius. Let $b(f)$ be the supremum of all such positive radii. Then $$B=\inf\{b(f): f\in A(\Bbb{D}), f'(0)=1\}$$ is called the Bloch constant (see \cite{CGH2000}), whose exact 
value remains an open problem despite significant efforts (see \cite{CG1996, AG1937}). Shortly thereafter, Landau \cite{L1926} proved a result that explicitly connects the growth of an analytic function to its univalence and covering properties, now known as the classical Landau theorem.
It states that if $f$ is an analytic function in the unit disk $\Bbb{D}$ with $f(0)=0$, $f'(0)=1$, and $|f(z)|<M$ in $\Bbb{D}$ for some $M>1$, then $f$ is univalent in $\Bbb{D}_{r_0}$ with 
\beas r_0=\frac{1}{M+\sqrt{M^2-1}},\eeas
and $f(\Bbb{D}_{r_0})$ contains a disk $\Bbb{D}_{R_0}$ with $R_0=Mr_0^2$. This result is sharp, with the extremal function $f_0(z)=Mz(1-Mz)/(M-z)$. The Landau theorem is often called the Landau-Bloch theorem because it sharpens Bloch's existence result by giving an explicit radius in terms of a growth bound. Figure \ref{landau} illustrates the geometric content of the classical Landau theorem.
\begin{figure}[H]
\centering
\includegraphics[scale=0.75]{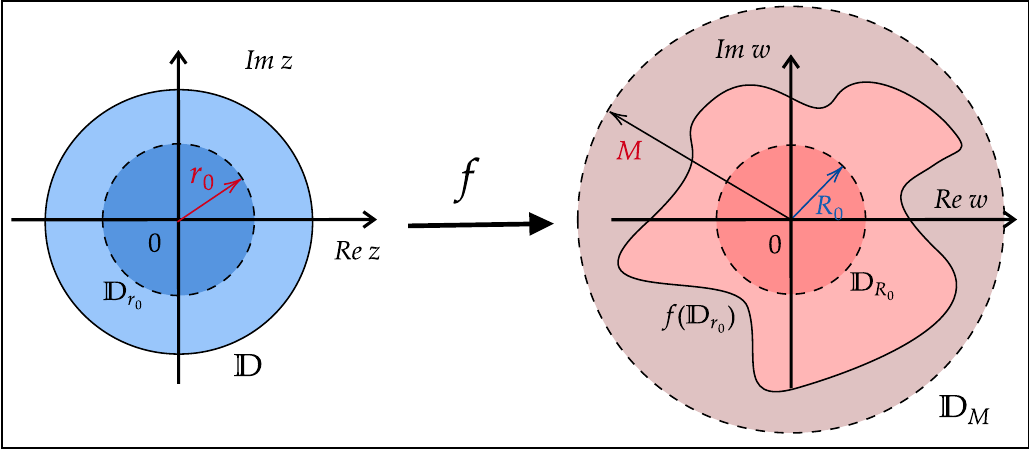}
\caption{Illustration of the classical Landau theorem}
\label{landau}
\end{figure}
An analytic function $f$ in $\Bbb{D}$ is said to be a Bloch function (see \cite{P1970, GK2003}) if 
\beas \|f\|_{\mathcal{B}}=\sup_{z\in\Bbb{D}} \left(1-|z|^2\right)|f'(z)|<\infty.\eeas 
In this case $\|f\|_{\mathcal{B}}$ is called the Bloch semi-norm of $f$. The space $\mathcal{B}$ of analytic Bloch functions in the unit disk $\Bbb{D}$ forms a Banach space
under the norm given by
\beas \|f\|=\|f\|_\mathcal{B}+|f(0)|.\eeas
In 2000, Chen {\it et al.} \cite{CGH2000} proved the following Landau-type theorems for harmonic mappings in the unit disk $\Bbb{D}$. 
\begin{theoA}\cite{CGH2000}
Let $f$ be a harmonic mapping of the unit disk $\Bbb{D}$ such that $f(0)=0$, $f_{\ol{z}}(0)=0$, $f_z(0)=1$, and $|f(z)|<M$ for $z\in\Bbb{D}$. Then, $f$ is univalent in a disk $\Bbb{D}_{\rho_0}$
with $\rho_0=\pi^2/(16mM)$ and $f(\Bbb{D}_{\rho_0})$ contains a schlicht disk $\Bbb{D}_{\sigma_0}$ with $\sigma_0=\rho_0/2=\pi^2/(32mM)$, where $m\approx 6.85$ is the minimum of the function $(3-r^2)/\left(r(1-r^2)\right)$ for $0<r<1$. 
\end{theoA}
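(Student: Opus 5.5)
We prove Theorem A with the classical Chen--Gauthier--Hengartner argument: bound the Taylor coefficients of $h$ and $g$ from $|f|<M$, use these bounds to control how far $f$ deviates from its linear part, and then read off univalence and covering.

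\textbf{Coefficient bounds.} Write $f=h+\overline{g}$ with $h(z)=z+\sum_{n\ge2}a_nz^n$ and $g(z)=\sum_{n\ge2}b_nz^n$; the normalization $f(0)=0$, $f_z(0)=1$, $f_{\ol z}(0)=0$ gives exactly this form. Fix $n\ge 1$ and $\theta$. We have
\[
\frac{1}{2\pi}\int_0^{2\pi} f(re^{it})e^{-int}\,dt=a_nr^n,\qquad \frac{1}{2\pi}\int_0^{2\pi} f(re^{it})e^{int}\,dt=\ol{b_n}r^n .
\]
Hence
\[
a_nr^n+e^{i\theta}\ol{b_n}r^n=\frac{1}{2\pi}\int_0^{2\pi} f(re^{it})\bigl(e^{-int}+e^{i\theta}e^{int}\bigr)\,dt .
\]
The modulus of the kernel is $2|\cos(nt+\theta/2)|$, whose mean value is $4/\pi$. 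Using $|f|<M$ and letting $r\to1$ gives $|a_n|+|b_n|\le 4M/\pi$ for $n\ge2$ (choose $\theta$ to align the arguments).

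\textbf{Univalence.} Take $z_1\neq z_2$ in $\Bbb{D}_\rho$ and let $\gamma=[z_1,z_2]$. Then
\[
|f(z_1)-f(z_2)|\ge\Bigl|\int_\gamma h'(0)\,dz\Bigr|-\Bigl|\int_\gamma \bigl(h'(z)-h'(0)\bigr)dz+\ol{\bigl(g'(z)-g'(0)\bigr)dz}\Bigr|.
\]
The first term equals $|z_1-z_2|$. The second is at most
\[
|z_1-z_2|\sum_{n\ge2}n(|a_n|+|b_n|)\rho^{n-1}\le |z_1-z_2|\,\frac{4M}{\pi}\Bigl(\frac{1}{(1-\rho)^2}-1\Bigr).
\]
So univalence on $\Bbb{D}_\rho$ requires $\frac{4M}{\pi}\bigl((1-\rho)^{-2}-1\bigr)<1$.

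This crude bound does not yield $\pi^2/(16mM)$. The stated constant comes instead from controlling the nonlinear part through a Schwarz--Pick/Heinz-type estimate. One bounds the dilatation $\Lambda_f$ on $|z|=r$ by $\frac{4M}{\pi}\cdot\frac{1}{1-r^2}$ via the harmonic Schwarz lemma, and then estimates $|f_z(z)-1|+|f_{\ol z}(z)|$ in terms of $|z|$ by a Cauchy-formula argument on an auxiliary circle of radius $r$. Optimizing over $r$ produces the minimum $m$ of $(3-r^2)/(r(1-r^2))$. Concretely, for $|z|\le\rho<r$ we expect
\[
|f_z(z)-f_z(0)|+|f_{\ol z}(z)|\le \frac{4M}{\pi}\cdot\frac{|z|\,(3-r^2)}{r(1-r^2)}\cdot(\text{const}) .
\]
Choosing $r$ optimally gives a bound of the form $\frac{16mM}{\pi^2}|z|$; the $\pi^2/16$ reflects the sharp harmonic Schwarz constant $4/\pi$ entering twice.

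Getting this derivative estimate with exactly these constants is the main obstacle. Once it holds, the segment argument gives
\[
|f(z_1)-f(z_2)|\ge|z_1-z_2|\Bigl(1-\frac{16mM}{\pi^2}\rho\Bigr)>0\quad\text{for }\rho<\rho_0 .
\]
Univalence on $\Bbb{D}_{\rho_0}$ then follows by taking the limit.

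\textbf{Covering.} For $|z|=\rho_0$, integrate along the radial segment from $0$ to $z$:
\[
|f(z)|\ge\rho_0-\int_0^{\rho_0}\frac{16mM}{\pi^2}\,t\,dt=\rho_0-\frac{16mM}{\pi^2}\cdot\frac{\rho_0^2}{2}=\frac{\rho_0}{2}.
\]
Since $f$ is univalent on $\Bbb{D}_{\rho_0}$ with $f(0)=0$, the degree/argument principle shows that $f(\Bbb{D}_{\rho_0})\supset\Bbb{D}_{\sigma_0}$ with $\sigma_0=\rho_0/2$.
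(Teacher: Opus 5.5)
The paper gives no proof of Theorem A; it is quoted from Chen--Gauthier--Hengartner. So your argument has to stand on its own, and as written it does not. Both your univalence step and your covering step rest on the estimate
\[
|f_z(z)-1|+|f_{\ol z}(z)|\le \frac{16mM}{\pi^2}\,|z| \qquad (|z|\le \rho_0).
\]
You only say you ``expect'' this, with an unspecified factor ``(const)'', and you leave it open as ``the main obstacle''. The auxiliary-circle sketch meant to produce it is never carried out, so the central step is a genuine gap. The misstep is the sentence ``this crude bound does not yield $\pi^2/(16mM)$''. Your coefficient estimate does not reproduce that formula, but it gives a \emph{larger} univalence radius, and univalence on a disk passes to every smaller concentric disk.

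Your own computation closes the gap. The Fourier argument works verbatim for $n=1$ and gives $1=|a_1|+|b_1|\le 4M/\pi$, i.e.\ $M\ge\pi/4$. Hence $\rho_0=\pi^2/(16mM)\le \pi/(4m)<0.12$. For $|z|=t\le\rho_0$,
\[
|f_z(z)-1|+|f_{\ol z}(z)|\le\sum_{n\ge2}n\bigl(|a_n|+|b_n|\bigr)t^{n-1}\le\frac{4M}{\pi}\cdot\frac{2-t}{(1-t)^2}\,t\le\frac{4M}{\pi}\cdot 2.6\,t\le\frac{4M}{\pi}\cdot\frac{4m}{\pi}\,t=\frac{16mM}{\pi^2}\,t,
\]
since $4m/\pi\approx 8.72$. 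This is exactly what your segment argument and radial integration need, so univalence on $\Bbb{D}_{\rho_0}$ and $f(\Bbb{D}_{\rho_0})\supset\Bbb{D}_{\rho_0/2}$ follow as you wrote them.

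There is plenty of room to spare. Your criterion $\frac{4M}{\pi}\bigl((1-\rho)^{-2}-1\bigr)<1$ holds for all $\rho<1-2\sqrt{M/(4M+\pi)}$. That is about $\pi/(8M)$ for large $M$, compared with $\rho_0\approx 0.09/M$. On $|z|=\rho_0$ one also gets directly
\[
|f(z)|\ge\rho_0\Bigl(1-\frac{\pi}{4m(1-\rho_0)}\Bigr)>0.8\,\rho_0 .
\]

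Your auxiliary-circle idea also works if completed. The sharp bound $\Lambda_f(z)\le\frac{4M}{\pi(1-|z|^2)}$ is known but must be cited. With it, the Schwarz lemma on $\Bbb{D}_r$ applied to $h'+e^{i\theta}g'-1$ (which vanishes at $0$ and is bounded there by $1+\frac{4M}{\pi(1-r^2)}$) gives
\[
|f_z(z)-1|+|f_{\ol z}(z)|\le\frac{4M}{\pi}\cdot\frac{2-r^2}{r(1-r^2)}\,|z|\le\frac{4mM}{\pi}\,|z|\qquad(|z|<r\approx 0.63).
\]
So your ``(const)'' can be taken to be $1$. The specific value $\pi^2/(16mM)$ is just the output of a lossier argument in the original source. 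You do not need to reproduce it, and the heuristic about $4/\pi$ ``entering twice'' should be dropped.
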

\begin{theoB}\cite{CGH2000}
Let $f$ be a harmonic mapping of the unit disk $\Bbb{D}$ such that $f(0)=0$, $\lambda_f(0)=1$, and $\Lambda_f(z)\leq \Lambda$ for $z\in\Bbb{D}$. Then, $f$ is univalent in a disk $\Bbb{D}_{\rho_1}$
with $\rho_1=\pi/(4(1+\Lambda))$ and $f(\Bbb{D}_{\rho_1})$ contains a schlicht disk $\Bbb{D}_{\sigma_1}$ with $\sigma_1=\rho_1/2=\pi/(8(1+\Lambda))$.
\end{theoB}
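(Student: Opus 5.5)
We prove Theorem B with the standard two-step scheme for harmonic Landau theorems: first a coefficient bound coming from the hypothesis $\Lambda_f\le\Lambda$, then an estimate along segments showing univalence in $\Bbb{D}_{\rho_1}$, and finally a lower bound for $|f|$ on the circle $|z|=\rho_1$.

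\textbf{Step 1: coefficient bounds.} Write $f=h+\overline{g}$ with $h(z)=\sum_{n\ge1}a_nz^n$, $g(z)=\sum_{n\ge1}b_nz^n$; this uses $f(0)=0$ and normalizing $g(0)=0$. The hypothesis $\Lambda_f\le\Lambda$ says $|h'(z)|+|g'(z)|\le\Lambda$. Hence for every $\theta$ the analytic function $\phi_\theta(z)=h'(z)+e^{i\theta}g'(z)$ satisfies $|\phi_\theta|\le\Lambda$. Its Taylor coefficients are $n(a_n+e^{i\theta}b_n)z^{n-1}$. Applying the sharp Wiener-type bound $|c_k|\le(\Lambda^2-|c_0|^2)/\Lambda$ for bounded analytic functions, and choosing $\theta$ so that $|a_n|+|b_n|=|a_n+e^{i\theta}b_n|$, we obtain for $n\ge2$
\[
n(|a_n|+|b_n|)\le \frac{\Lambda^2-\lambda_f(0)^2}{\Lambda}\le \Lambda\quad\text{(or simply }\le\Lambda\text{ from Cauchy)}.
\]
For the constant $\pi/(4(1+\Lambda))$ it is enough to use the crude bound $n(|a_n|+|b_n|)\le\Lambda$.

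\textbf{Step 2: univalence.} Take distinct $z_1,z_2$ in $\Bbb{D}_r$ and let $\gamma$ be the segment joining them. Then
\[
|f(z_1)-f(z_2)|=\Bigl|\int_\gamma f_z\,dz+f_{\bar z}\,d\bar z\Bigr|\ge \Bigl|\int_\gamma h'(0)dz+g'(0)d\bar z\Bigr|-\int_\gamma\bigl(|h'(z)-h'(0)|+|g'(z)-g'(0)|\bigr)|dz|.
\]
The first term is at least $\lambda_f(0)|z_1-z_2|=|z_1-z_2|$. For the second, we need a bound on $|h'(z)-h'(0)|+|g'(z)-g'(0)|$. Rather than summing coefficients, we use a Schwarz-lemma argument: $\phi_\theta-\phi_\theta(0)$ vanishes at $0$ and is bounded by $\Lambda+1$, since $|\phi_\theta(0)|\le\Lambda_f(0)\le\Lambda$. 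Where $\pi/4$ enters is not yet clear. I would try instead a harmonic Schwarz lemma, $|F(z)|\le\frac{4}{\pi}\arctan|z|$ for $|F|\le1$ harmonic, applied to $(f_z-f_z(0))$-type quantities with bound $1+\Lambda$; this gives $|h'(z)-h'(0)|+|g'(z)-g'(0)|\le (1+\Lambda)\frac{4}{\pi}\arctan|z|\le\frac{4(1+\Lambda)}{\pi}|z|$. Consequently the integral is less than $|z_1-z_2|$ whenever $|z|<\pi/(4(1+\Lambda))=\rho_1$, so $|f(z_1)-f(z_2)|>0$ and $f$ is univalent in $\Bbb{D}_{\rho_1}$. \textbf{The main obstacle} is pinning down which Schwarz-type lemma gives exactly $4/\pi$ with the factor $(1+\Lambda)$. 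The simple analytic Schwarz lemma gives $|\phi_\theta(z)-\phi_\theta(0)|\le 2\Lambda|z|$ type bounds and a different constant, so I would adopt the harmonic version to match the stated radius.

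\textbf{Step 3: covering.} For $|z|=\rho_1$, apply the same estimate along the radius from $0$ to $z$:
\[
|f(z)|\ge \rho_1-\frac{4(1+\Lambda)}{\pi}\int_0^{\rho_1}t\,dt=\rho_1-\frac{2(1+\Lambda)}{\pi}\rho_1^2=\rho_1/2=\sigma_1.
\]
Since $f$ is univalent on $\Bbb{D}_{\rho_1}$ with $f(0)=0$, $f(\Bbb{D}_{\rho_1})$ contains $\Bbb{D}_{\sigma_1}$.
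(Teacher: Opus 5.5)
The paper does not prove Theorem B; it only quotes it from Chen--Gauthier--Hengartner. So I judge your argument on its own. There is a genuine gap, and it is the one you flag yourself in Step 2. The estimate
\[
|h'(z)-h'(0)|+|g'(z)-g'(0)|\le \frac{4(1+\Lambda)}{\pi}\arctan|z|
\]
is never justified, and it cannot be obtained the way you propose. The function $\phi_\theta-\phi_\theta(0)$ vanishes at $0$, but it is only bounded by $\Lambda+|\phi_\theta(0)|\le\Lambda+\Lambda_f(0)$. Your remark $|\phi_\theta(0)|\le\Lambda$ therefore gives $2\Lambda$, not $1+\Lambda$, and $\Lambda_f(0)$ can be close to $\Lambda$.

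In fact the displayed inequality is false on $\mathbb{D}$, so no Schwarz lemma, harmonic or analytic, applied to a function bounded by $1+\Lambda$ can produce it. For $\Lambda>1$ pick $c\in(1/\Lambda,1)$ and $\omega(z)=(z+c)/(1+cz)$. Set $h'=\alpha\omega$, $g'=\beta\omega$, $h(0)=g(0)=0$, with $\alpha+\beta=\Lambda$ and $\alpha-\beta=1/c$. Then $f(0)=0$, $\lambda_f(0)=1$ and $\Lambda_f=\Lambda|\omega|\le\Lambda$. However, the left side equals $\Lambda|\omega(z)-c|$, which tends to $\Lambda(1+c)>1+\Lambda$ as $z\to-1$.

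With the bound $2\Lambda$ that you can justify, the harmonic Schwarz lemma only gives radius $\pi/(8\Lambda)<\rho_1$. Step 3 rests on the same estimate, so it inherits the gap. Step 1 is never used.

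\textbf{The fix.} The missing idea is not to subtract $F(0)$ in full. Fix $\alpha=\arg(z_2-z_1)$ and let $F(z)=e^{i\alpha}f_z(z)+e^{-i\alpha}f_{\bar z}(z)$. Then $f(z_2)-f(z_1)=\int_0^{|z_2-z_1|}F(z_1+te^{i\alpha})\,dt$, and $F$ is harmonic with $|F|\le\Lambda$ and $|F(0)|\ge\lambda_f(0)=1$. Heinz's lemma in its non-normalized form says $\bigl|G(z)-\frac{1-|z|^2}{1+|z|^2}G(0)\bigr|\le\frac4\pi\arctan|z|$ for harmonic $G$ with $|G|\le 1$. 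Applying it to $G=F/\Lambda$ gives
\[
|f(z_2)-f(z_1)|\ge\int_0^{|z_2-z_1|}\Bigl(\frac{1-|z|^2}{1+|z|^2}-\frac{4\Lambda}{\pi}\arctan|z|\Bigr)\,dt,\qquad z=z_1+te^{i\alpha}.
\]
By concavity of $\cos 2s$ on $[0,\pi/4]$, $\frac{1-r^2}{1+r^2}=\cos(2\arctan r)\ge 1-\frac4\pi\arctan r$. Hence the integrand is at least $1-\frac{4(1+\Lambda)}{\pi}|z|$. So the ``$1$'' in $1+\Lambda$ is the loss in the main term, not part of a bound on $F-F(0)$. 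This gives univalence on $\mathbb{D}_{\rho_1}$. Your Step 3 computation, with $z_1=0$, then goes through verbatim and yields $\sigma_1=\rho_1/2$.

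\textbf{An alternative closer to your setup.} You can keep your $\phi_\theta$ and use Schwarz--Pick with $s=|\phi_\theta(0)|\in[1,\Lambda]$. This gives $|\phi_\theta(z)-\phi_\theta(0)|\le\frac{(\Lambda^2-s^2)|z|}{\Lambda-s|z|}\le\frac{(\Lambda^2-1)|z|}{\Lambda-|z|}$ for $|z|\le 1/\Lambda$. That already gives univalence on $\mathbb{D}_{1/\Lambda}$, which is Lemma~\ref{lem14}(ii). Since $\rho_1<1/\Lambda$ and $\frac{\Lambda^2-1}{\Lambda-t}\le\frac{4(1+\Lambda)}{\pi}$ for $t\le\rho_1$, it also gives the covering radius $\rho_1/2$.
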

It is important to note that \textrm{Theorems A} and \textrm{B} are not sharp. Better estimates were subsequently provided by Dorff and Nowak \cite{DN2004}. Subsequently, Grigoyan \cite{G2006} has proved the following results, which improves the estimates in \textrm{Theorems A} and \textrm{B}.  
\begin{theoC}\cite{G2006}
Let $f$ be a harmonic mapping of the unit disk $\Bbb{D}$ such that $f(0)=0$, $J_f(0)=1$ and $|f(z)|<M$ for $z\in\Bbb{D}$. Then, $f$ is univalent on a disk $\Bbb{D}_{\rho_2}$
with $$\rho_2=1-\frac{2\sqrt{2}M}{\sqrt{\pi+8M^2}}$$ and $f(\Bbb{D}_{\rho_2})$ contains a schlicht disk $\Bbb{D}_{\sigma_2}$ with 
$$\sigma_2=\frac{\pi}{4M}+4M-4M\sqrt{1+\frac{\pi}{8M^2}}.$$ 
\end{theoC}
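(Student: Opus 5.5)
We follow the standard scheme for harmonic Landau theorems. The ingredients are a coefficient bound, a lower bound on the directional derivative, and a lower bound on the image of the boundary circle.

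\emph{Step 1: coefficient bound.} Write $f=h+\overline{g}$ with $h(z)=\sum_{n\ge1}a_nz^n$ and $g(z)=\sum_{n\ge1}b_nz^n$. Since $|f|<M$ and $f$ is harmonic, Parseval on circles $|z|=r\to1$ gives
$$\sum_{n\ge1}\left(|a_n|^2+|b_n|^2\right)\le M^2 .$$
By Cauchy--Schwarz, $|a_n|+|b_n|\le\sqrt{2}\,\sqrt{|a_n|^2+|b_n|^2}$. Hence for $n\ge2$,
$$\sum_{n\ge2}(|a_n|+|b_n|)^2\le 2\left(M^2-|a_1|^2-|b_1|^2\right).$$
From $J_f(0)=|a_1|^2-|b_1|^2=1$ we obtain $|a_1|^2+|b_1|^2\ge1$. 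A more careful use of $J_f(0)=1$ should produce the $\pi$ terms. I would use the known bound $\lambda_f(0)\ge \pi/(4M)$ type estimate (the harmonic Schwarz lemma $\Lambda_f(0)\le 4M/\pi$). Combined with $\lambda_f(0)\Lambda_f(0)=1$, it gives $\lambda_f(0)=|a_1|-|b_1|\ge \pi/(4M)$. This is where $\pi$ enters.

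\emph{Step 2: univalence on $\Bbb{D}_\rho$.} Take $z_1\ne z_2$ in $\Bbb{D}_\rho$ and integrate along the segment between them:
$$|f(z_1)-f(z_2)|\ge |z_1-z_2|\left(\lambda_f(0)-\sum_{n\ge2}n(|a_n|+|b_n|)\rho^{n-1}\right).$$
Next apply Cauchy--Schwarz to the tail:
$$\sum_{n\ge2}n(|a_n|+|b_n|)\rho^{n-1}\le\Big(\sum_{n\ge2}(|a_n|+|b_n|)^2\Big)^{1/2}\Big(\sum_{n\ge2}n^2\rho^{2n-2}\Big)^{1/2}.$$
The first factor is at most $\sqrt{2}M$, and a crude version gives $\sqrt{2}M\,\rho\,(\cdots)/(1-\rho)^{2}$-type expressions. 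To match $\rho_2=1-2\sqrt2 M/\sqrt{\pi+8M^2}$, which solves $(1-\rho)^2(\pi+8M^2)=8M^2$, I would instead bound the tail by a sharper series estimate chosen so that positivity reduces to
$$\frac{\pi}{4M}> 2M\left(\frac{1}{(1-\rho)^2}-1\right).$$
This rearranges exactly to $(1-\rho)^{-2}<1+\pi/(8M^2)$, i.e.\ $\rho<\rho_2$. The required estimate is
$$\sum_{n\ge2}n(|a_n|+|b_n|)\rho^{n-1}\le 2M\sum_{n\ge2}n\rho^{n-1}.$$
This follows from $|a_n|+|b_n|\le 4M/\pi\le 2M$, the harmonic version of the coefficient bound $|a_n|+|b_n|\le 4M/\pi$ from the Poisson representation. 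Since $2M\sum_{n\ge2}n\rho^{n-1}=2M\left((1-\rho)^{-2}-1\right)$, the inequality is exactly as required. The precise constants, in particular choosing $2M$ rather than $4M/\pi$, is the main obstacle; I would verify which bound reproduces $\rho_2$.

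\emph{Step 3: schlicht disk.} For $|z|=\rho_2$, estimate
$$|f(z)|\ge \lambda_f(0)\rho_2-2M\sum_{n\ge2}\rho_2^n\ge \frac{\pi}{4M}\rho_2-2M\frac{\rho_2^2}{1-\rho_2}.$$
Substitute $1-\rho_2=2\sqrt2M/\sqrt{\pi+8M^2}$ and simplify. Here $\frac{\pi}{4M}\rho_2=\frac{\pi}{4M}-\frac{\pi}{\sqrt2\sqrt{\pi+8M^2}}$, and the second term expands similarly. The sum should collapse to
$$\frac{\pi}{4M}+4M-4M\sqrt{1+\pi/(8M^2)}=\sigma_2 .$$
The algebraic check is routine. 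Then $f(\Bbb{D}_{\rho_2})\supset\Bbb{D}_{\sigma_2}$ follows from univalence and $f(0)=0$ by the usual argument-principle/degree reasoning.
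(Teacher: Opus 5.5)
The paper gives no proof of Theorem C; it is quoted from \cite{G2006}, so there is no in-paper argument to compare yours with. Judged on its own, your final argument is correct and complete. Only two inputs are needed. The first is $\Lambda_f(0)=|a_1|+|b_1|\le 4M/\pi$, which with $J_f(0)=\lambda_f(0)\Lambda_f(0)=1$ gives $\lambda_f(0)\ge \pi/(4M)$. The second is $|a_n|+|b_n|\le 2M$ for $n\ge 2$. This already follows from Parseval ($|a_n|,|b_n|\le M$), so $\pi$ enters only through the $n=1$ bound. The segment-integration estimate then gives injectivity on every $\Bbb{D}_r$ with $r<\rho_2$, hence on $\Bbb{D}_{\rho_2}$. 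The algebra you called routine does close. With $s=\sqrt{1+\pi/(8M^2)}$ one has $1-\rho_2=1/s$ and $\pi/(4M)=2M(s^2-1)$, so
\[
\frac{\pi}{4M}\rho_2-2M\frac{\rho_2^2}{1-\rho_2}=\frac{2M(s-1)}{s}\bigl((s^2-1)-(s-1)\bigr)=2M(s-1)^2=\frac{\pi}{4M}+4M-4Ms=\sigma_2 .
\]
The covering step is the usual one. $f(\Bbb{D}_{\rho_2})$ is open and contains $0$. Its boundary lies in $f(\partial\Bbb{D}_{\rho_2})$, where $|f|\ge\sigma_2$, so it contains $\Bbb{D}_{\sigma_2}$.

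Three presentational corrections. First, the Parseval/Cauchy--Schwarz material in Step 1 and the first half of Step 2 is never used and should be deleted. Second, $2M$ is not a ``sharper'' tail estimate; it is cruder than $4M/\pi$ or the Parseval bound $\sqrt{2}M$. Third, there is no real obstacle in choosing the constant. $2M$ reproduces $\rho_2$ and $\sigma_2$ exactly. Using $4M/\pi$ instead gives injectivity on the strictly larger disk of radius $1-4M/\sqrt{\pi^2+16M^2}$ and a larger lower bound for $|f|$ on $|z|=\rho_2$, so it also implies the stated result.
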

\begin{theoD}\cite{G2006}
Let $f$ be a harmonic mapping of the unit disk $\Bbb{D}$ such that $f(0)=0$, $\lambda_f(0)=1$, and $\Lambda_f(z)\leq \Lambda$ for $z\in\Bbb{D}$. Then, $f$ is univalent on a disk $\Bbb{D}_{\rho_3}$
with $\rho_3=1/(1+\Lambda)$ and $f(\Bbb{D}_{\rho_3})$ contains a schlicht disk $\Bbb{D}_{\sigma_3}$ with $\sigma_3=1-\Lambda \log\left(1+\frac{1}{\Lambda}\right)$.
\end{theoD}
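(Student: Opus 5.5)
Write $f=h+\ol{g}$ with $h,g$ analytic in $\Bbb{D}$ and $h(0)=g(0)=0$. I would use the standard two-step scheme: first prove univalence on $\Bbb{D}_{\rho_3}$ by a line-integral estimate, then obtain the schlicht disk from a lower bound on $|f|$ over the circle $|z|=\rho_3$.

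\textbf{Step 1: coefficient bounds.} Since $\Lambda_f(z)=|h'(z)|+|g'(z)|\le\Lambda$ in $\Bbb{D}$, both $h'$ and $g'$ are bounded by $\Lambda$. Write $h(z)=\sum a_nz^n$ and $g(z)=\sum b_nz^n$. Applying Cauchy's estimate to $h'$ and to $g'$ gives $n|a_n|\le\Lambda$ and $n|b_n|\le\Lambda$ for $n\ge1$.

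A sharper bound is available for the sum. For each fixed $\theta$, consider the analytic function
\[
\phi_\theta(z)=h'(z)+e^{i\theta}g'(z),\qquad |\phi_\theta|\le\Lambda .
\]
Cauchy's estimate for $\phi_\theta$ bounds $n|a_n+e^{i\theta}b_n|$ by $\Lambda$. Choosing $\theta$ to align the two coefficients yields
\[
n\bigl(|a_n|+|b_n|\bigr)\le\Lambda\quad (n\ge2).
\]

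\textbf{Step 2: univalence on $\Bbb{D}_{\rho_3}$.} Take distinct $z_1,z_2\in\Bbb{D}_r$ and let $\gamma$ be the segment joining them. Then
\[
|f(z_2)-f(z_1)|\ge\Bigl|\int_\gamma a_1\,dz+b_1\,d\ol{z}\Bigr|-\Bigl|\int_\gamma\bigl(h'(z)-a_1\bigr)dz+\bigl(\ol{g'(z)-b_1}\bigr)d\ol{z}\Bigr|.
\]
The first term is at least $\lambda_f(0)|z_2-z_1|=|z_2-z_1|$.

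For the second term, the integrand is bounded by $\sum_{n\ge2}n(|a_n|+|b_n|)|z|^{n-1}$. Step 1 alone would give $\Lambda\sum_{n\ge2}r^{n-1}=\Lambda r/(1-r)$, which is too crude. Instead I would use the Schwarz-type lemma for $\phi_\theta-\phi_\theta(0)$. Since $|\phi_\theta(0)|\ge\lambda_f(0)=1$ and $|\phi_\theta|\le\Lambda$, this should give
\[
\Lambda_{f}(z)-\Lambda_f(0)\ \text{ and the oscillation of } Df \ \text{ bounded by }\ \Lambda|z| \quad(\text{up to constants}).
\]
Concretely, the target inequality is
\[
|f(z_2)-f(z_1)|\ge|z_2-z_1|\bigl(1-\Lambda r\bigr)>0
\]
whenever $r<1/(1+\Lambda)$, with the constant arranged to match $\rho_3=1/(1+\Lambda)$.

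This is the main obstacle. The constant must come out exactly as $1+\Lambda$, so the Schwarz-lemma bound on $|\phi_\theta(z)-\phi_\theta(0)|$ has to be done carefully. I would use
\[
|\phi_\theta(z)-\phi_\theta(0)|\le(\Lambda+|\phi_\theta(0)|)|z|\le(\Lambda+\Lambda)|z|,
\]
and then refine it, using $\lambda_f(0)=1$, to an effective rate of $1+\Lambda$.

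\textbf{Step 3: the schlicht disk.} For $|z|=\rho_3$, integrate along the radial segment $[0,z]$. The pointwise lower bound
\[
\lambda_f(t z/|z|)\ge 1-\frac{\Lambda t}{\ \ldots\ }
\]
from Step 2 integrates to
\[
|f(z)|\ge\int_0^{\rho_3}\Bigl(1-\tfrac{\Lambda t}{\Lambda - t}\cdots\Bigr)dt .
\]
The expected logarithmic form comes from the estimate $\lambda_f(z)\ge 1-\Lambda\,t/(1-t)$-type bounds. Integrating from $0$ to $1/(1+\Lambda)$ produces $1-\Lambda\log(1+1/\Lambda)$. For example, the bound
\[
\lambda\ge\frac{1-(1+\Lambda)t}{1-t}
\]
integrates to exactly $1-\Lambda\log\bigl(1+\frac1\Lambda\bigr)$. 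So I would aim to prove precisely this pointwise bound in Step 2, via a Schwarz–Pick estimate for $\phi_\theta/\Lambda$.

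Finally, since $f$ is univalent on $\Bbb{D}_{\rho_3}$ and $f(0)=0$, the image $f(\Bbb{D}_{\rho_3})$ contains $\Bbb{D}_{\sigma_3}$.
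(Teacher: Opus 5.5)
\textbf{There is a genuine gap.} As written, Step 2 does not prove univalence on $\mathbb{D}_{\rho_3}$, and Step 3 does not prove the covering.

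\emph{Step 2.} The only concrete estimate you give is $|\phi_\theta(z)-\phi_\theta(0)|\le 2\Lambda|z|$. This yields $|h'(\zeta)-a_1|+|g'(\zeta)-b_1|\le 2\Lambda|\zeta|$, and hence univalence only for $r<1/(2\Lambda)$. That is strictly smaller than $1/(1+\Lambda)$ whenever $\Lambda>1$. The promised refinement ``to an effective rate of $1+\Lambda$'' is never specified. Your target $|f(z_2)-f(z_1)|\ge|z_2-z_1|(1-\Lambda r)$ vanishes at $r=1/\Lambda$, not at $\rho_3$, and it is left unproved. A genuine Schwarz--Pick estimate, $|\phi_\theta(z)-\phi_\theta(0)|\le(\Lambda^2-|\phi_\theta(0)|^2)|z|/(\Lambda-|\phi_\theta(0)||z|)$ combined with $|\phi_\theta(0)|\ge 1$, would work. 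But it proves the stronger radius $1/\Lambda$ of Lemma \ref{lem14}, which is a different and more delicate argument.

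\emph{Step 3.} Besides the placeholders, the inference itself is invalid. A pointwise lower bound on $\lambda_f$, integrated along the radius $[0,z]$, does not bound $|f(z)|$ from below. The reason is that $f(z)=\int_{[0,z]}(f_\zeta\,d\zeta+f_{\bar\zeta}\,d\bar\zeta)$ can have much smaller modulus than $\int_{[0,z]}\lambda_f\,|d\zeta|$. Two things do integrate correctly. One is an \emph{upper} bound on the remainder $|h'(\zeta)-a_1|+|g'(\zeta)-b_1|$, subtracted from the linear term. The other is $\lambda_f$ integrated along the preimage under $f$ of the segment $[0,f(z)]$, which uses the univalence already established.

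\textbf{Your Step 1 already closes both gaps.} The bound $n(|a_n|+|b_n|)\le\Lambda$ that you dismiss as too crude is exactly sufficient.

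For distinct $z_1,z_2\in\mathbb{D}_r$, use $|z_1^n-z_2^n|\le nr^{n-1}|z_1-z_2|$ to get
\[
|f(z_1)-f(z_2)|\ge|z_1-z_2|\Bigl(\lambda_f(0)-\sum_{n\ge2}n\bigl(|a_n|+|b_n|\bigr)r^{n-1}\Bigr)\ge|z_1-z_2|\Bigl(1-\frac{\Lambda r}{1-r}\Bigr).
\]
Here $1-\Lambda r/(1-r)>0$ precisely when $r<1/(1+\Lambda)=\rho_3$.

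For $|z|=\rho_3$, using $|a_1z+b_1\bar z|\ge\lambda_f(0)|z|$, you get
\[
|f(z)|\ge|z|-\sum_{n\ge2}\bigl(|a_n|+|b_n|\bigr)|z|^n\ge\rho_3-\Lambda\sum_{n\ge2}\frac{\rho_3^n}{n}=(1+\Lambda)\rho_3+\Lambda\log(1-\rho_3)=1-\Lambda\log\Bigl(1+\frac{1}{\Lambda}\Bigr).
\]
This is $\sigma_3$, and it is positive because $\log(1+1/\Lambda)<1/\Lambda$.

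Note that your Step 3 integrand $\frac{1-(1+\Lambda)t}{1-t}$ is just $1-\frac{\Lambda t}{1-t}$, which is the Step 1 remainder bound itself; no Schwarz--Pick argument is needed. The paper only quotes this theorem without proof. The constants $\rho_3$ and $\sigma_3$ are exactly what this coefficient argument produces.
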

In 2022, Wang and Zhong \cite{WZ2026} established the following Landau-type theorem for $K$-quasiregular harmonic mapping.
\begin{theoE}\cite{WZ2026} Suppose that $f(z)=h(z)+\ol{g(z)}$ is a harmonic mapping in the unit disk $\Bbb{D}$ with $|g'|\leq k|h'|$, $k\in[0, 1)$, where $h(z)=\sum_{n=1}^\infty a_n z^n$ and $g(z)=\sum_{n=1}^\infty b_n z^n$. If $\Lambda_f(z)\leq M$ for all $z\in\Bbb{D}$ and $|\lambda_f(0)|=1$, then $f$ is univalent in the disk 
$\Bbb{D}_{\rho_4}$ and $f(\Bbb{D}_{\rho_4})$ contains a schlicht disk $\Bbb{D}_{\sigma_4}$, where $\rho_4$ is the least positive root of the equation 
\bea\label{a0}&& 1-\frac{Mr}{1-r}\left(1+\frac{k}{\sqrt{k^2+1}}\right)=0\\[2mm]\text{and}
 &&\sigma_4=\rho_4+M(\rho_4+\log(1-\rho_4))\left(1+\frac{k}{\sqrt{k^2+1}}\right).\nonumber\eea
\end{theoE}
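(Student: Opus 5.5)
Following the standard Landau-type scheme, I first establish univalence on $\Bbb{D}_{\rho_4}$ and then derive the covering radius from a lower bound for $|f|$ on $|z|=\rho_4$.

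\textbf{Coefficient bounds.} Since $\Lambda_f=|h'|+|g'|\le M$ and $|g'|\le k|h'|$, we get $|h'(z)|\le M/(1+k)$ and $|g'(z)|\le kM/(1+k)$. A bound closer to the $1/\sqrt{k^2+1}$ factor in \eqref{a0} comes from Parseval. Fix $z$ and $\theta$, and consider the analytic function $\phi_\theta(z)=h'(z)+e^{i\theta}g'(z)$, whose modulus is at most $\Lambda_f\le M$. Then
$$\sum_n n^2|a_n+e^{i\theta}b_n|^2\le M^2 .$$
Averaging over $\theta$ gives $\sum_n n^2(|a_n|^2+|b_n|^2)\le M^2$. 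Combined with $|g'|\le k|h'|$, this yields $\sum n^2|b_n|^2\le k^2\sum n^2|a_n|^2$ (Parseval applied on circles $|z|=r$, then $r\to1$). Hence
$$\sum n^2|a_n|^2\le \frac{M^2}{1+k^2},\qquad \sum n^2|b_n|^2\le \frac{k^2M^2}{1+k^2}.$$
For $n\ge2$ I would simply use $n|a_n|\le M$ and $n|b_n|\le kM/\sqrt{1+k^2}$ (each term is bounded by the whole sum). Since $h'(0)=a_1$, $g'(0)=b_1$ and $|g'(0)|\le k|h'(0)|$, we have $\lambda_f(0)=|a_1|-|b_1|=1$.

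\textbf{Univalence.} Take distinct $z_1,z_2\in\Bbb{D}_r$ and let $\gamma$ be the segment joining them. Then
$$|f(z_1)-f(z_2)|\ge \Big|\int_\gamma a_1dz+\ol{b_1}d\ol z\Big|-\Big|\int_\gamma \big(h'(z)-a_1\big)dz+\ol{\big(g'(z)-b_1\big)}d\ol z\Big|.$$
The first term is at least $(|a_1|-|b_1|)|z_1-z_2|=|z_1-z_2|$. For the second I use $|h'(z)-a_1|\le\sum_{n\ge2}n|a_n||z|^{n-1}$ and the analogous bound for $g'$. The bounds $n|a_n|\le M$ and $n|b_n|\le Mk/\sqrt{1+k^2}$ give
$$|h'(z)-a_1|+|g'(z)-b_1|\le M\left(1+\frac{k}{\sqrt{k^2+1}}\right)\frac{r}{1-r}.$$
Therefore $|f(z_1)-f(z_2)|>0$ whenever the expression on the left of \eqref{a0} is positive, which holds for $r<\rho_4$. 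Letting $r\to\rho_4$ gives univalence on $\Bbb{D}_{\rho_4}$.

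\textbf{Covering.} For $|z|=\rho_4$, integrate along the radius:
$$|f(z)|\ge |a_1z+\ol{b_1z}|-\sum_{n\ge2}(|a_n|+|b_n|)\rho_4^n\ge \rho_4-M\left(1+\frac{k}{\sqrt{k^2+1}}\right)\sum_{n\ge2}\frac{\rho_4^n}{n}.$$
Since $\sum_{n\ge2}\rho^n/n=-\rho-\log(1-\rho)$, the right-hand side equals $\sigma_4$. As $f(0)=0$ and $f$ is univalent on $\Bbb{D}_{\rho_4}$, $f(\Bbb{D}_{\rho_4})$ contains $\Bbb{D}_{\sigma_4}$.

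\textbf{Main obstacle.} The delicate step is the coefficient estimate for $b_n$ with the factor $k/\sqrt{k^2+1}$. It requires passing the pointwise inequality $|g'|\le k|h'|$ into the Parseval sums and averaging over $\theta$ correctly. A cruder bound such as $n|b_n|\le kM/(1+k)$ would also work but gives a different constant, and I would check which one matches \eqref{a0}. A further subtle point is that $\sigma_4$ must be positive at $\rho_4$; this follows because $\rho+M c(\rho+\log(1-\rho))$ is increasing on $[0,\rho_4]$ by \eqref{a0}.
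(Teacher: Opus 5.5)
Your proof is correct. It also takes a route the paper does not. The paper never proves Theorem E; it quotes it from Wang and Zhong. Its only comment, in the remark after the statement, is that univalence follows from Lemma \ref{lem14}: under $\Lambda_f\le M$, $\lambda_f(0)=1$ that lemma gives univalence on $\Bbb{D}_{1/M}$. This disk contains $\Bbb{D}_{\rho_4}$ because $\rho_4=1/\bigl(1+M(1+k/\sqrt{k^2+1})\bigr)<1/M$. That route gives a larger univalence disk and does not need $|g'|\le k|h'|$. However, it does not by itself address the covering claim for $f(\Bbb{D}_{\rho_4})$. Your lower bound for $|f|$ on $|z|=\rho_4$ does, so your self-contained coefficient argument proves both halves as stated.

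Two comments on details. First, the claims $|h'|\le M/(1+k)$ and $\sum n^2|a_n|^2\le M^2/(1+k^2)$ are false. They would require $|g'|\ge k|h'|$; for a counterexample take $f(z)=z$, $M=1$, $k>0$. You never use them: only $n|a_n|\le M$ and $n|b_n|\le kM/\sqrt{k^2+1}$ enter the argument, and both are valid.

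Second, the Parseval step you identify as the main obstacle is unnecessary. Cauchy's estimate for the bounded function $h'+e^{i\theta}g'$ gives $n|a_n+e^{i\theta}b_n|\le M$ for every $\theta$, hence $n(|a_n|+|b_n|)\le M$. Your argument then goes through with the factor $1+k/\sqrt{k^2+1}$ replaced by $1$. This still yields the stated conclusion: the univalence disk becomes larger, and since $\rho_4+\log(1-\rho_4)<0$, the resulting lower bound for $|f|$ on $|z|=\rho_4$ is at least $\sigma_4$.
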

\begin{rem}
\textrm{Theorem E} of Wang and Zhong \cite{WZ2026} gives a univalence radius $\rho_4$, which is the least positive root of the equation (\ref{a0}).
However, under the same hypotheses $\Lambda_f(z)\leq M$ and $\lambda_f(0)=1$, \textrm{Lemma} \ref{lem14} directly implies that $f$ is univalent in the disk $\Bbb{D}_{1/M}$. The radius $1/M$ is larger than $\rho_4$ and is sharp by the extremal function $f_0(z)=Mz(1-Mz)/(M-z)$.
\end{rem}
In 2024, Liu and Xu \cite{LX2024} obtained the following Landau-type theorem for $K$-quasiregular harmonic mapping with bounded analytic part.
\begin{theoF}\cite{LX2024}\label{1LiuXu2024}
Let $f = h + \ol{g}$ be a \(K\)-quasiregular harmonic mapping in $\Bbb{D}$ with $f(0)=0$, $\lambda_f(0)=1$, and $|h(z)|\leq M$ for all $z\in\Bbb{D}$. Then $f$ is univalent on $\Bbb{D}_{\rho_5}$ and $f(\Bbb{D}_{\rho_5})$ contains a schlicht disk $\Bbb{D}_{\sigma_5}$, where
\beas
\rho_5= \frac{K+1}{8KM}\quad\text{and}\quad 
\sigma_5 = \frac{2KM}{K+1}\left\{1+\left(\left(\frac{4KM}{K+1}\right)^2-1\right)\log\!\left(1-\frac{(K+1)^2}{16K^2M^2}\right)\right\}.
\eeas
\end{theoF}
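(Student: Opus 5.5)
We prove Theorem F along the standard route for Landau-type theorems: coefficient estimates, then univalence on a small disk, then a lower bound for $|f|$ on the boundary circle.

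\emph{Normalization and coefficient bounds.} Write $h(z)=\sum_{n\ge1}a_nz^n$ and $g(z)=\sum_{n\ge1}b_nz^n$. We may take $g(0)=0$, since $f(0)=0$ lets us absorb the constant. $K$-quasiregularity means $|g'|\le k|h'|$ with $k=(K-1)/(K+1)$. The condition $\lambda_f(0)=|a_1|-|b_1|=1$, together with $|b_1|\le k|a_1|$, gives
\[
|a_1|\ge \frac{1}{1-k}=\frac{K+1}{2}.
\]
Since $|h|\le M$, the Cauchy estimates and the sharpened Wiener-type inequality give
\[
|a_n|\le M-\frac{|a_1|^2}{M}\le M \quad (n\ge2).
\]
For the anti-analytic part, $g'=\omega h'$ with $|\omega|\le k$. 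Hence on $|z|=r$ we have $|g'(z)|\le k|h'(z)|$, and $h'$ is controlled by Cauchy's estimate $|h'(z)|\le M/(1-r)$.

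\emph{Univalence on $\Bbb{D}_{\rho_5}$.} Take $z_1\neq z_2$ in $\Bbb{D}_\rho$ and integrate along the segment $[z_1,z_2]$:
\[
|f(z_1)-f(z_2)|\ge \Big|\int_{[z_1,z_2]} h'(0)\,dz+\ol{g'(0)}\,d\ol z\Big|-\Big|\int_{[z_1,z_2]} (h'(z)-h'(0))\,dz+\ol{(g'(z)-g'(0))}\,d\ol z\Big|.
\]
The first term is at least $\lambda_f(0)|z_1-z_2|=|z_1-z_2|$. For the second we bound $|h'(z)-h'(0)|\le \sum_{n\ge2}n|a_n|r^{n-1}$. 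The analogous quantity for $g$ has to be estimated through the quasiregularity: we use $|g'(z)-g'(0)|\le$ a multiple $k$ of the bound for $h'$. This leads to a sufficient condition of the form
\[
1-(1+k)\,M\Big(\frac{1}{(1-r)^2}-1\Big)\ge0
\]
for the integral to stay positive. Solving this, or using the cruder linearization $2r\le\ldots$, gives a radius $\rho$ with $\rho\ge (K+1)/(8KM)$. Since $1+k=2K/(K+1)$, the factor $\frac{K+1}{8KM}=\frac{1}{4(1+k)M}$ is exactly what one gets from the linear bound $\sum_{n\ge2}nMr^{n-1}\le 2Mr\cdot 2$ for small $r$. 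This step is the main obstacle. The difficulty is that the coefficients $b_n$ are not directly bounded by $|h|\le M$; the bound must pass through $g'=\omega h'$ with $\omega$ a self-map. My first attempt is to bound $|g'(z)-g'(0)|$ by $k|h'(z)-h'(0)|+|\omega(z)-\omega(0)||h'(0)|$. If that is too lossy, I would instead work directly with the estimate $\Lambda_f\le(1+k)|h'|$ and with $|h'(z)-h'(0)|$ from the Cauchy formula, where $|h|\le M$ gives $|h''|\le 2M/(1-r)^3$-type control.

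\emph{Covering.} For $|z|=\rho_5$ we estimate
\[
|f(z)|\ge \lambda_f(0)\rho_5-\int_0^{\rho_5}\Big(\Lambda\text{-increment bound at radius }t\Big)\,dt.
\]
Here the increment bound is of the form $\frac{4KM}{K+1}\cdot\frac{t(\cdot)}{1-t^2}$ type. Integrating this produces exactly the logarithm $\log\!\big(1-\rho_5^2\cdot 4\big)$ appearing in $\sigma_5$, and the constant $(4KM/(K+1))^2-1$ comes from partial fractions. We would then verify that this lower bound simplifies to $\sigma_5$ and is positive. Finally, $f(\Bbb{D}_{\rho_5})$ is a univalent image of a disk containing $0$, and its boundary stays at distance at least $\sigma_5$ from $0$, so it contains $\Bbb{D}_{\sigma_5}$ as a schlicht disk.
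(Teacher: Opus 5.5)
There is a genuine gap, and it sits where you flagged the main obstacle; the covering step inherits it. Your sufficient condition $1-(1+k)M\bigl((1-r)^{-2}-1\bigr)\ge 0$ presupposes $|g'(z)-g'(0)|\le kM\bigl((1-r)^{-2}-1\bigr)$. That does not follow from $|g'|\le k|h'|$, because a pointwise bound on $g'$ gives no direct control of its increments. In your fallback $g'(z)-g'(0)=\omega(z)\bigl(h'(z)-h'(0)\bigr)+\bigl(\omega(z)-\omega(0)\bigr)h'(0)$, the second term is never estimated, and you never check that the resulting condition still holds at $r=\rho_5$.

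The covering step is not derived either. Integrating a coefficient-type bound such as $(1+k)M\bigl((1-t)^{-2}-1\bigr)$ gives the algebraic bound $r-(1+k)Mr^2/(1-r)$, with no logarithm. So the claim that the integration ``produces exactly'' $\log(1-4\rho_5^2)$ and the constant $(4KM/(K+1))^2-1$ is read off from the answer. On your route you would have to prove that your bound dominates $\sigma_5$.

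Separately, $\lambda_f(0)=1$ and $|b_1|\le k|a_1|$ give $1\le|a_1|\le (K+1)/2$, not $|a_1|\ge (K+1)/2$. For a counterexample, take $f(z)=z$ with $K>1$.

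The missing idea is never to split $h'$ from $g'$. Since $\Lambda_f=|h'|+|g'|=\max_{|\epsilon|=1}|h'+\epsilon g'|$ and each $h'+\epsilon g'$ is analytic, a pointwise bound $\Lambda_f<M'$ helps directly. Together with $|h'(0)+\epsilon g'(0)|\ge\lambda_f(0)=1$, Schwarz--Pick then bounds both increments at once: $|h'(z)-h'(0)|+|g'(z)-g'(0)|\le (M'^2-1)|z|/(M'-|z|)$. This is what Lemma \ref{lem14}(ii) packages.

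The paper only quotes Theorem F. However, its proof of Theorem \ref{T1} (rescale $P(z)=Cf(z/C)$, then apply Lemma \ref{lem14}) gives Theorem F exactly with $C=2$:
\begin{itemize}
\item Cauchy's estimate gives $|h'(z)|\le M/(1-|z|)<2M$ on $\Bbb{D}_{1/2}$.
\item Hence $\Lambda_f<\frac{2K}{K+1}\cdot 2M=\frac{4KM}{K+1}=:M'$ there, and $M'\ge 2>1$ because $M\ge|h'(0)|\ge 1$.
\item So $F(z)=2f(z/2)$ has $F(0)=0$, $\lambda_F(0)=1$ and $\Lambda_F<M'$ on $\Bbb{D}$.
\item Lemma \ref{lem14}(ii) makes $F$ univalent on $\Bbb{D}_{1/M'}$ with $F(\Bbb{D}_{1/M'})\supset\Bbb{D}_{\sigma(M')}$, where $\sigma(M')=M'+(M'^3-M')\log(1-1/M'^2)=\int_0^{1/M'}\bigl(1-\frac{(M'^2-1)s}{M'-s}\bigr)\,ds$.
\item Scaling back gives $\rho_5=1/(2M')$ and $\sigma_5=\sigma(M')/2$ exactly.
\end{itemize}
That integral is where the logarithm actually comes from, not from anything of the form $t/(1-t^2)$.
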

The Landau-type theorem for various classes of harmonic mappings has been studied in \cite{CG2011, CPW2011, G2006, H2008, 1L2009, 2L2009, LC2018, Z2015,ABMY2025, 1AK2024,BL2019,LL2021,LL2023}.
The above discussion naturally motivates the improvement of \textrm{Theorems E} and \textrm{F}, and the establishment of new Landau-type theorems for 
$K$-quasiregular harmonic mappings under hypotheses involving boundedness of $h$, $h'$, and $\text{Re}(h)$, and the Bloch space condition on $h$. The present paper provides affirmative answers to all these objectives.
\section{Some lemmas}
To prove our main results, the following lemmas play a crucial role.
\begin{lem}\label{lem0}\cite{DP2008}  If $h(z)=\sum_{n=0}^\infty a_n z^n$ is analytic in $\Bbb{D}$ with $|h(z)|\leq1$ for $z\in\Bbb{D}$. Then, we have 
\beas \left|\frac{h^{(n)}(z)}{n!}\right|\leq \frac{1-|h(z)|^2}{(1-|z|^{n-1})(1-|z|^2)}\quad\text{and}\quad |a_n|\leq 1-|a_0|^2\;\;\text{for each}\;\; n\geq 1\;\;\text{and}\;\; z\in\Bbb{D}.\eeas
\end{lem}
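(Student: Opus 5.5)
The plan is to split the lemma into its two assertions. The coefficient bound $|a_n|\le 1-|a_0|^2$ is the classical Wiener-type inequality. The derivative estimate is Ruscheweyh's inequality; it will be reduced to the coefficient bound by composing $h$ with a disk automorphism. One point must be fixed first. As printed, the factor $1-|z|^{n-1}$ vanishes for $n=1$, and for $n\ge3$ it is strictly larger than $(1-|z|)^{n-1}$. I will therefore prove the standard form
\[
\frac{|h^{(n)}(z)|}{n!}\le \frac{1-|h(z)|^2}{(1-|z|)^{n-1}(1-|z|^2)},
\]
which is the version in \cite{DP2008} and the one needed later. I regard the printed factor as a typographical slip for $(1-|z|)^{n-1}$; it coincides with it for $n=2$.

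\emph{Step 1 (coefficients).} Let $\omega=e^{2\pi i/n}$ and average over rotations:
\[
H(\zeta)=\frac1n\sum_{j=0}^{n-1}h(\omega^j\zeta)=a_0+a_n\zeta^n+a_{2n}\zeta^{2n}+\cdots .
\]
Then $|H|\le1$ and $H(\zeta)=G(\zeta^n)$ with $G(w)=a_0+a_nw+\cdots$ and $|G|\le1$. If $|a_0|=1$, then $h$ is constant by the maximum principle and there is nothing to prove. Otherwise the Schwarz--Pick lemma at $w=0$ gives $|G'(0)|\le 1-|G(0)|^2$, that is, $|a_n|\le 1-|a_0|^2$.

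\emph{Step 2 (derivatives).} Fix $z\in\Bbb{D}$ and consider $F(\zeta)=h\big((\zeta+z)/(1+\bar z\zeta)\big)=\sum_{k\ge0}c_k\zeta^k$. Then $|F|\le1$, $c_0=h(z)$, and Step 1 gives $|c_k|\le 1-|h(z)|^2$ for every $k\ge1$. Inverting, $h(w)=F(\sigma(w))$ with $\sigma(w)=(w-z)/(1-\bar z w)$. Since $\sigma(z)=0$, only $c_1,\dots,c_n$ contribute to $h^{(n)}(z)$:
\[
\frac{h^{(n)}(z)}{n!}=\sum_{k=1}^{n}c_k\,[\sigma^k]_n ,
\]
where $[\sigma^k]_n$ is the $n$-th Taylor coefficient of $\sigma^k$ at $z$. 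A direct computation, writing $\sigma(z+t)=t/(1-|z|^2-\bar z t)$, gives
\[
\sigma(z+t)^k=\frac{t^k}{(1-|z|^2)^k}\Big(1-\frac{\bar z t}{1-|z|^2}\Big)^{-k}.
\]
Hence
\[
[\sigma^k]_n=\binom{n-1}{k-1}\frac{\bar z^{\,n-k}}{(1-|z|^2)^n}.
\]
Putting these together,
\[
\frac{|h^{(n)}(z)|}{n!}\le \frac{1-|h(z)|^2}{(1-|z|^2)^n}\sum_{k=1}^n\binom{n-1}{k-1}|z|^{n-k}=\frac{(1-|h(z)|^2)(1+|z|)^{n-1}}{(1-|z|^2)^n},
\]
and the right-hand side simplifies to $(1-|h(z)|^2)/\big((1-|z|)^{n-1}(1-|z|^2)\big)$.

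The main obstacle is Step 2. The naive Cauchy estimate on $\Bbb{D}_{1-|z|}(z)$ only yields $(1-|h(z)|^2)/(1-|z|)^n$, which loses the factor $1+|z|$. The Möbius composition recovers that factor, but only if the binomial sum over $k$ is computed exactly rather than estimated term by term. So the care goes into the closed form of $[\sigma^k]_n$ and into the binomial summation.
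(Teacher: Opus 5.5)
Your proof is correct. The paper gives no proof of this lemma; it simply quotes it from \cite{DP2008}, so there is no internal argument to compare against.

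What you wrote is the standard proof of Ruscheweyh's inequality. First, Wiener's bound $|a_n|\le 1-|a_0|^2$ follows from averaging over the $n$-th roots of unity and applying Schwarz--Pick. Second, the estimate is transported to the point $z$ by the automorphism $\sigma$, with the Taylor coefficients of $\sigma^k$ evaluated exactly. Each step checks out. The formula $[\sigma^k]_n=\binom{n-1}{k-1}\bar z^{\,n-k}(1-|z|^2)^{-n}$ is right, and the binomial sum does give $(1+|z|)^{n-1}$. Extracting the $t^n$ coefficient termwise from $\sum_k c_k\,\sigma(z+t)^k$ is legitimate, because $|\sigma(z+t)|\le\rho<1$ for small $|t|$ and so the series converges uniformly there. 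You could say this explicitly.

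Replacing $1-|z|^{n-1}$ by $(1-|z|)^{n-1}$ is necessary, not merely convenient. For $n=1$ the printed denominator is $0$. For $n\ge 3$ the printed inequality is false. Take $n=3$, $z=1/2$ and the automorphism $h(w)=(w-b)/(1-bw)$ with $b=4/5$. Then $h(1/2)=-1/2$ and
\[
\frac{h'''(1/2)}{3!}=\frac{b^2(1-b^2)}{(1-b/2)^4}=\frac{16}{9},
\]
whereas the printed right-hand side is $(3/4)/\bigl((3/4)(3/4)\bigr)=4/3$. The corrected bound gives $4$.

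The paper uses the lemma only for $n=1$, applied to $h/M$ in the proofs of Theorems \ref{T1} and \ref{T6}. There your corrected form reduces to the Schwarz--Pick inequality $|h'(z)|\le(1-|h(z)|^2)/(1-|z|^2)$, which is exactly what those proofs use.
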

\begin{lem}\cite{H2014, LC2018}\label{lem14}
Let $f=h+\ol{g}$ be a harmonic mapping in the unit disk $\Bbb{D}$ such that $f(0)=0$ and $\lambda_f(0)=1$. 
\begin{enumerate}
\item[(i)] If $\Lambda_f(z)\leq 1$ for $z\in\Bbb{D}$, then $f$ is univalent in $\Bbb{D}$ and $f(\Bbb{D})$ contains a schlicht disk $\Bbb{D}$ and the result is sharp.
\item[(ii)] If $\Lambda_f(z)<M$ for $z\in\Bbb{D}$, then $M>1$ and $f$ is univalent in $\Bbb{D}_\rho$ with $\rho=1/M$, and $f(\Bbb{D}_\rho)$ contains a schlicht disk $\Bbb{D}_\sigma$, where 
\beas \sigma=M+(M^3-M)\log\left(1-\frac{1}{M^2}\right).\eeas
The result is sharp.
\end{enumerate}
\end{lem}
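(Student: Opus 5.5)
Because $\Lambda_f=|h'|+|g'|$ is bounded by $M$, I would reduce to the classical Landau theorem in its derivative form. The plan is to take $z_1\neq z_2$ in $\Bbb{D}_\rho$ and estimate $|f(z_1)-f(z_2)|$ from below by integrating along the segment $[z_1,z_2]$.

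\textbf{Step 1 (derivative bounds).} Set $a=h'(0)$ and $b=g'(0)$, so $\lambda_f(0)=\bigl||a|-|b|\bigr|=1$. Since $|h'|+|g'|<M$, apply Lemma \ref{lem0} to $h'/M$ and to $g'/M$, or better, to $\varphi_\theta=(h'+e^{i\theta}g')/M$ for arbitrary $\theta$, which satisfies $|\varphi_\theta|\le \Lambda_f/M<1$. This gives
\[
|h'(z)-h'(0)|+|g'(z)-g'(0)|\le \sup_\theta |\varphi_\theta(z)-\varphi_\theta(0)|\,M .
\]
Schwarz--Pick for the self-map $\varphi_\theta$ bounds the right side by $M\,\dfrac{(1-|\varphi_\theta(0)|^2)|z|}{1-|\varphi_\theta(0)||z|}$. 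Choosing $\theta$ with $|\varphi_\theta(0)|=(|a|+|b|)/M=:t$, and using $t\ge 1/M$ because $|a|+|b|\ge 1$, I would optimize over $t\in[1/M,1)$. The expression decreases in $t$ for $|z|$ small, so the worst case is $t=1/M$, which yields
\[
|h'(z)-a|+|g'(z)-b|\le \frac{(M^2-1)|z|}{M-|z|}.
\]
This is exactly the bound satisfied by the extremal function $Mz(1-Mz)/(M-z)$.

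\textbf{Step 2 (univalence).} For $z_1\neq z_2$ in $\Bbb{D}_\rho$, write
\[
|f(z_1)-f(z_2)|\ge \Bigl|\int (a\,dz+\ol{b\,dz})\Bigr|-\int\bigl(|h'-a|+|g'-b|\bigr)|dz| .
\]
The first term is at least $\bigl||a|-|b|\bigr|\,|z_1-z_2|=|z_1-z_2|$. Hence $|f(z_1)-f(z_2)|>|z_1-z_2|\bigl(1-\frac{(M^2-1)r}{M-r}\bigr)$, where $r=\max(|z_1|,|z_2|)<\rho$. This is positive exactly when $r<1/M$, so $f$ is univalent on $\Bbb{D}_{1/M}$. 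For part (i), $M\le1$ forces $M=1$, $\Lambda_f$ is constant, and $h',g'$ are constant, so $f$ is an affine map $az+\ol{bz}$ with $\bigl||a|-|b|\bigr|=1$; its image of $\Bbb{D}$ contains $\Bbb{D}$. For (ii), $M>1$ holds because $\Lambda_f(0)\ge\lambda_f(0)=1$ and the inequality is strict.

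\textbf{Step 3 (covering).} For $|z|=\rho$, integrate radially:
\[
|f(z)|\ge \int_0^\rho \Bigl(1-\frac{(M^2-1)s}{M-s}\Bigr)ds .
\]
Writing $\frac{s}{M-s}=-1+\frac{M}{M-s}$, this becomes $\rho+(M^2-1)\rho+(M^2-1)M\log\bigl(1-\frac{\rho}{M}\bigr)$, which at $\rho=1/M$ equals $M+(M^3-M)\log\bigl(1-\frac1{M^2}\bigr)=\sigma$. Sharpness follows from $f_0(z)=Mz(1-Mz)/(M-z)$: $f_0'(1/M)=0$, and $f_0(1/M)$ equals the lower integral bound, since equality holds along the positive radius.

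\textbf{Main obstacle.} The main obstacle is Step 1, namely getting the \emph{combined} bound on $|h'-a|+|g'-b|$ with the sharp constant despite the unknown split of $|a|+|b|$, while using only $\bigl||a|-|b|\bigr|=1$. The delicate part is the monotonicity in $t$ of $(1-t^2)/(1-tr)$: its derivative has the sign of $-2t+r+rt^2$, which is negative when $r<1/M\le t$. I would check that inequality carefully, noting it suffices because only $r<1/M$ is needed.
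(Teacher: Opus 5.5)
The paper does not prove this lemma; it is quoted from \cite{H2014, LC2018}. On its own terms, your univalence and covering argument is sound and follows the standard route: Schwarz--Pick applied to $\varphi_\theta=(h'+e^{i\theta}g')/M$, the bound $|h'(z)-a|+|g'(z)-b|\le (M^2-1)|z|/(M-|z|)$ for $|z|\le 1/M$, and then integration along chords and radii.

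Two justifications need repair.

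\textbf{Step 1.} You cannot ``choose'' $\theta$, because you are bounding a supremum over all $\theta$. What you need is
\[
|\varphi_\theta(0)|=\frac{|a+e^{i\theta}b|}{M}\ge \frac{\bigl||a|-|b|\bigr|}{M}=\frac1M \quad\text{for every } \theta .
\]
This is where $\lambda_f(0)=1$ enters. The inequality $|a|+|b|\ge 1$ only controls the largest of these values. With the uniform lower bound in hand, your monotonicity in $t$ gives a bound uniform in $\theta$.

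\textbf{Part (i).} Constancy of $h'$ and $g'$ needs a line of proof. From $1=\lambda_f(0)\le\Lambda_f(0)\le 1$ you get $ab=0$. The maximum modulus principle then makes constant whichever derivative has unimodular value at $0$, and $|h'|+|g'|\le 1$ forces the other to vanish.

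\textbf{The genuine error: the sharpness claim.} The function $Mz(1-Mz)/(M-z)$ is the extremal for the classical Landau theorem under $|f|<M$, not for this lemma. Its derivative is
\[
\frac{M^2(1-2Mz+z^2)}{(M-z)^2}.
\]
This vanishes at $M-\sqrt{M^2-1}<1/M$, equals $-M^2/(M^2-1)\neq0$ at $z=1/M$, and has modulus $2M^2/(M+1)>M$ at $z=-1$. So this function does not even satisfy $\Lambda_f<M$. Neither ``$f_0'(1/M)=0$'' nor ``$f_0(1/M)$ equals the lower integral bound'' is true, and it does not attain your Step 1 bound. The paper makes the same slip in the remark after Theorem E.

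The correct extremal, used in the paper's Theorems \ref{T0} and \ref{T2}, is $f_0(z)=\int_0^z M\frac{1-M\zeta}{M-\zeta}\,d\zeta$. For this function:
\begin{itemize}
\item $|f_0'|<M$, since $(1-Mz)/(M-z)$ is a disk automorphism;
\item $f_0'(z)-1=(1-M^2)z/(M-z)$, so your Step 1 bound is an equality on $[0,1/M]$;
\item $f_0'(1/M)=0$, so $f_0$ is not univalent on any larger disk;
\item $f_0(1/M)=\int_0^{1/M}\bigl(M^2-\frac{M(M^2-1)}{M-s}\bigr)ds=\sigma$.
\end{itemize}
To conclude that $\sigma$ cannot be enlarged, also note that your chord estimate is strict at interior points of a chord. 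Hence $f_0$ is injective on $|z|\le 1/M$, and so $\sigma=f_0(1/M)\notin f_0(\Bbb{D}_{1/M})$.
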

\section{Main results}
In the following result, we obtain a Landau-type theorem for a $K$-quasiregular harmonic mapping. 
\begin{theo}\label{T0}
Let $f=h+\ol{g}$ be a $K$-quasiregular harmonic mapping in the unit disk $\Bbb{D}$ such that $f(0)=0$ and $\lambda_f(0)=1$. If $\lambda_f(z)\leq M$ for all $z\in\Bbb{D}$, then the following hold:
\begin{enumerate}
\item[(i)] If $KM>1$, then $f$ is univalent in the disk $\Bbb{D}_{r_0}$ and $f(\Bbb{D}_{r_0})$ contains a schlicht disk $\Bbb{D}_{r_1}$, where
\beas
r_0=\frac{1}{KM}\quad\text{and}\quad r_1=KM+\left((KM)^3-KM\right)\log\left(1-\frac{1}{(KM)^2}\right).
\eeas
The radius $r_0$ is sharp for $K=1$ for the following extremal function
\beas f_0(z)=\int_0^z M\,\frac{1-Mz}{M-z}\,dz,\quad \text{where}\quad M>1.\eeas
\item[(ii)] If $KM=1$, then $f$ is univalent in the unit disk $\Bbb{D}$ and $f(\Bbb{D})$ contains a schlicht disk $\Bbb{D}$. In fact, $f(z)=e^{i\theta}z$ for some $\theta\in\Bbb{R}$.
\end{enumerate}
\end{theo}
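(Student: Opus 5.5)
The plan is to reduce Theorem \ref{T0} directly to Lemma \ref{lem14}. The key observation is that $K$-quasiregularity converts the bound on the minimum distortion into a bound on the maximum distortion. Since $f$ is $K$-quasiregular, $\Lambda_f(z)\leq K\lambda_f(z)$ for every $z\in\Bbb{D}$. Combined with the hypothesis $\lambda_f(z)\leq M$, this gives
\[\Lambda_f(z)\leq KM \quad\text{for all } z\in\Bbb{D}.\]
The normalizations $f(0)=0$ and $\lambda_f(0)=1$ are exactly those of Lemma \ref{lem14}, so we apply that lemma with $KM$ in place of $M$.

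\textbf{Case (i), $KM>1$.} Lemma \ref{lem14}(ii) is stated with a strict inequality $\Lambda_f<M$, so there is a small technicality. One option is to check that its proof only uses $\Lambda_f\leq M$ (the standard proof does: it estimates $|f(z_1)-f(z_2)|$ via the coefficient bounds). Alternatively, apply the lemma with $M'=KM+\varepsilon$ and let $\varepsilon\to0$. The univalence radius $1/M'$ increases to $1/(KM)$, and univalence on each $\Bbb{D}_{1/M'}$ gives univalence on their union $\Bbb{D}_{1/(KM)}$.

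For the covering statement, the function $\sigma(M')=M'+(M'^3-M')\log(1-1/M'^2)$ is continuous in $M'$. The image $f(\Bbb{D}_{1/M'})$ contains $\Bbb{D}_{\sigma(M')}$, so $f(\Bbb{D}_{r_0})$ contains the union of these disks, which is $\Bbb{D}_{r_1}$.

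For sharpness when $K=1$, consider the extremal $f_0$. It is analytic, so $\lambda_{f_0}=\Lambda_{f_0}=|f_0'|=M|1-Mz|/|M-z|$. This quantity is bounded by $M$, since $z\mapsto(1-Mz)/(M-z)$ maps $\Bbb{D}$ onto $\Bbb{D}$, and it equals $1$ at $z=0$. Moreover $f_0'(1/M)=0$, so $f_0$ fails to be locally univalent at $1/M$. Hence no disk $\Bbb{D}_r$ with $r>1/M$ works.

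\textbf{Case (ii), $KM=1$.} Here $\Lambda_f\leq1$, and Lemma \ref{lem14}(i) gives univalence on $\Bbb{D}$ and $f(\Bbb{D})\supseteq\Bbb{D}$. To identify $f$ explicitly, note $1=\lambda_f(0)\leq\Lambda_f(0)\leq1$. So $|h'(0)|-|g'(0)|=|h'(0)|+|g'(0)|=1$, which forces $g'(0)=0$ and $|h'(0)|=1$. Also $|h'(z)|\leq\Lambda_f(z)\leq1$ on $\Bbb{D}$. Since $|h'|$ attains the maximum value $1$ at the interior point $0$, the maximum modulus principle gives $h'\equiv e^{i\theta}$.

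It remains to show $g'\equiv0$. From $|h'|+|g'|\leq1$ and $|h'|\equiv1$ we get $g'\equiv0$. Using $f(0)=0$, this yields $f(z)=e^{i\theta}z$.

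The only real obstacle is the strict versus non-strict inequality in Lemma \ref{lem14}(ii), which the limiting argument handles. The rest is a direct substitution.
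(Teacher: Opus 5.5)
Your proposal is correct and follows essentially the same route as the paper: quasiregularity gives $\Lambda_f\le K\lambda_f\le KM$, Lemma \ref{lem14} is then applied with $KM$ in place of $M$, sharpness for $K=1$ comes from the critical point of $f_0$ at $1/M$, and the case $KM=1$ is settled by a maximum-modulus argument. The differences are minor. Your $\varepsilon$-limiting argument for the strict inequality in Lemma \ref{lem14}(ii) is extra care that the paper skips, since it applies the lemma directly under $\Lambda_f\le KM$. In case (ii) you apply maximum modulus to $h'$ using $\Lambda_f(0)=\lambda_f(0)=1$, whereas the paper first deduces $K=M=1$ and $g\equiv 0$ and then applies maximum modulus to $f'$.
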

\begin{proof}
Since $f$ is a $K$-quasiregular harmonic mapping in the unit disk $\Bbb{D}$, it follows that $\Lambda_f(z)\leq K\lambda_f(z)$ for $z\in\Bbb{D}$. 
Using $\lambda_f(z)\leq M$ for all $z\in\Bbb{D}$, we obtain $\lambda_f(0)=1\leq M$ and $\Lambda_f(z) \leq KM$ for all $z\in\Bbb{D}$. We now consider two cases.\\[2mm]
\textbf{Case 1:} When $KM = 1$. Since $K \geq 1$ and $M \geq 1$, we must have $K = M = 1$. Then $\Lambda_f(z) = \lambda_f(z)$ for all $z \in \Bbb{D}$, which implies
$|g'(z)| = 0$ for all $z \in \Bbb{D}$. Since $g(0) = 0$, we have $g \equiv 0$ in $\Bbb{D}$. Thus, $f$ is an analytic function in $\Bbb{D}$ with $|f'(0)| = 1$ and $|f'(z)| \leq 1$ 
for all $z\in\Bbb{D}$. Applying the Maximum Modulus Principle to the analytic function $f'$, we obtain that $f'$ is constant in $\Bbb{D}$, and consequently $f'(z) \equiv e^{i\theta}$ for 
some $\theta \in \Bbb{R}$. Since $f(0)=0$, we have $f(z) = e^{i\theta}z$. Thus, $f$ is univalent in the unit disk 
$\Bbb{D}$ and $f(\Bbb{D}) = \Bbb{D}$ contains a schlicht disk of radius $1$. Therefore, in this case, $r_0 = r_1 = 1$.\\
\textbf{Case 2:} When $KM > 1$. Then by applying \textrm{Lemma} \ref{lem14} to the mapping $f$, we obtain that $f$ is univalent in $\Bbb{D}_{r_0}$ with $r_0=1/(KM)$ and
 $f(\Bbb{D}_{r_0})$ contains a schlicht disk $\Bbb{D}_{r_1}$, where
\beas
r_1=KM+\left((KM)^3-KM\right)\log\left(1-\frac{1}{(KM)^2}\right).
\eeas
To prove the sharpness for $K=1$, we consider the following holomorphic function
\beas
f_0(z) = \int_0^z M\,\frac{1-M z}{M-z}\,dz,\quad\text{where}\quad M>1.\eeas
It is evident that $f_0$ satisfies $f_0(0)=0$ and $f_0'(0)=1$. Since the M\"{o}bius transformation $\phi(z)=(1-Mz)/(M-z)$ satisfies $|\phi(z)|=1$ on $|z|=1$ 
and is analytic in $\Bbb{D}$, applying the Maximum Modulus Principle, we obtain $|\phi(z)|\leq 1$ for $z\in\Bbb{D}$. Consequently, $|f_0'(z)| = M|\phi(z)| \leq M$ for $z\in\Bbb{D}$.
Furthermore, $f_0$ has a critical point at $z=1/M<1$ and consequently, $f_0$ cannot be univalent in any disk $\Bbb{D}_r$ with $r>1/M$. This proves the sharpness of the radius 
$r_0=1/M$ for $K=1$. This completes the proof.
\end{proof}
\begin{rem}
The univalence radius $r_0=1/(KM)$ in Theorem \ref{T0} improves the corresponding radius in Theorem E of Wang and Zhong \cite{WZ2026}, which is the least positive root of the equation (\ref{a0}), where $k=(K-1)/(K+1)$. Since $1/(KM)$ is larger than this root, Theorem \ref{T0} provides a significant improvement.
\end{rem}
By assuming the analytic part of a $K$-quasiregular harmonic mapping is bounded, we obtain the following result.
\begin{theo}\label{T1}
Let $f = h+\ol{g}$ be a $K$-quasiregular harmonic mapping in the unit disk $\Bbb{D}$ such that $f(0)=0$ and $\lambda_f(0)=1$. If $|h(z)|\leq M$ for $z\in\Bbb{D}$,
then $M\geq 1$, $f$ is univalent in the disk $\Bbb{D}_{r_2}$ and $f(\Bbb{D}_{r_2})$ contains a schlicht disk $\Bbb{D}_{r_3}$, where
\beas
r_2= \frac{K+1}{3\sqrt{3} KM}\quad\text{and}\quad 
r_3 = \frac{\sqrt{3}\,KM}{K+1}\left(1+\left(\left(\frac{3KM}{K+1}\right)^2-1\right)\log\!\left(1-\frac{(K+1)^2}{9K^2M^2}\right)\right).
\eeas
\end{theo}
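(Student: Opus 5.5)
The plan is to reduce to Lemma \ref{lem14}(ii) by dilation, so the work lies in getting a uniform bound on $\Lambda_f$ on a disk $\Bbb{D}_r$ and then choosing $r$ optimally.

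\textbf{Step 1 (distortion bound via quasiregularity).} Since $f$ is sense-preserving and $K$-quasiregular, $\Lambda_f\le K\lambda_f$ gives $|g'|\le k|h'|$ with $k=(K-1)/(K+1)$. Hence
\[
\Lambda_f(z)=|h'(z)|+|g'(z)|\le (1+k)|h'(z)|=\frac{2K}{K+1}|h'(z)|.
\]
Applying Lemma \ref{lem0} (Schwarz--Pick, $n=1$) to $h/M$ gives $|h'(z)|\le M/(1-|z|^2)$. Therefore
\[
\Lambda_f(z)\le \frac{2KM}{(K+1)(1-|z|^2)}.
\]
Also $1=\lambda_f(0)\le |h'(0)|\le M$, again by Lemma \ref{lem0}, which proves $M\ge 1$.

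\textbf{Step 2 (rescaling).} Fix $r\in(0,1)$ and set $F(z)=f(rz)/r$ on $\Bbb{D}$. Then $F$ is harmonic with $F(0)=0$ and $\lambda_F(0)=\lambda_f(0)=1$. Moreover
\[
\Lambda_F(z)=\Lambda_f(rz)\le M_r:=\frac{2KM}{(K+1)(1-r^2)}.
\]
We have $M_r\ge 2KM/(K+1)\cdot 1/(1-r^2)>1$, since $2K/(K+1)\ge1$ and $M\ge1$. Lemma \ref{lem14}(ii) therefore applies. If one insists on the strict inequality, one can apply the lemma with $M_r+\epsilon$ and let $\epsilon\to0$, using continuity of the radii.

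This gives that $F$ is univalent in $\Bbb{D}_{1/M_r}$, and $F(\Bbb{D}_{1/M_r})$ contains $\Bbb{D}_{\sigma(M_r)}$, where $\sigma(t)=t+(t^3-t)\log(1-1/t^2)$. Undoing the scaling, $f$ is univalent in $\Bbb{D}_{r/M_r}$ and $f(\Bbb{D}_{r/M_r})\supset \Bbb{D}_{r\sigma(M_r)}$.

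\textbf{Step 3 (optimizing $r$).} The univalence radius is
\[
\frac{r}{M_r}=\frac{(K+1)\,r(1-r^2)}{2KM}.
\]
The factor $r(1-r^2)$ is maximized at $r=1/\sqrt3$, where it equals $2/(3\sqrt3)$. This yields $r_2=(K+1)/(3\sqrt3KM)$.

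At this $r$ we get $M_r=3KM/(K+1)$. Substituting into $r\,\sigma(M_r)$ gives
\[
\frac{1}{\sqrt3}\cdot\frac{3KM}{K+1}\Bigl(1+\bigl((\tfrac{3KM}{K+1})^2-1\bigr)\log\bigl(1-\tfrac{(K+1)^2}{9K^2M^2}\bigr)\Bigr),
\]
which is exactly $r_3$.

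\textbf{Main obstacle.} The only delicate point is verifying the hypotheses of Lemma \ref{lem14}(ii) for $F$. One needs the strict bound $\Lambda_F<M_r$ (or the limiting argument above) and $M_r>1$. Both are routine once Step 1 is in place. The rest is the elementary optimization of $r(1-r^2)$ and algebraic substitution.
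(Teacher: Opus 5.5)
Your proposal is correct and is essentially the paper's proof: your rescaling $F(z)=f(rz)/r$ is the paper's $P(z)=Cf(z/C)$ with $r=1/C$, and your maximization of $r(1-r^2)$ at $r=1/\sqrt3$ is the paper's maximization of $(C^2-1)/C^3$ at $C=\sqrt3$. The $\epsilon$-limiting remark is unnecessary, since for $|z|<1$ you already have the strict bound $\Lambda_F(z)=\Lambda_f(rz)\le \frac{2KM}{(K+1)(1-r^2|z|^2)}<M_r$ that Lemma \ref{lem14}(ii) requires.
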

\begin{proof}
Since $f$ is a $K$-quasiregular harmonic mapping in the unit disk $\Bbb{D}$, we have $\Lambda_f(z)\leq K\lambda_f(z)$ for $z\in\Bbb{D}$. Thus, we have
\bea\label{a1} |g'(z)|\leq \frac{K-1}{K+1}|h'(z)|\quad\text{ for}\quad z\in\Bbb{D}.\eea
Using (\ref{a1}), we deduce that
\bea\label{a1a} \Lambda_f(z)=|h'(z)|+|g'(z)|\leq \frac{2K}{K+1}|h'(z)|.\eea
Since $h$ is analytic in $\Bbb{D}$ with $h(0)=0$ and $|h(z)|\leq M$ for $z\in\Bbb{D}$, using the Schwarz Lemma, we have $|h(z)|\leq M|z|$ for $z\in\Bbb{D}$. In view of \textrm{Lemma} \ref{lem0}, we have
\bea\label{a2a}
|h'(z)|\leq \frac{M(1-|h(z)/M|^2)}{1-|z|^2}\leq  \frac{M}{1-|z|^2},\quad z\in\Bbb{D}.
\eea
Therefore, we have
\bea\label{a2} \Lambda_f(z)\leq \frac{2KM}{(K+1)(1-|z|^2)},\quad z\in\Bbb{D}. \eea
Since $f$ is sense-preserving and $\lambda_f(0)=|h'(0)|-|g'(0)|=1$, it follows from (\ref{a2a}) that 
\beas  M \geq |h'(0)| = 1 + |g'(0)| \geq 1.\eeas
Let us consider the function $P(z)=C f(z/C)$ for $z\in\Bbb{D}$, where the parameter $C$ is a positive real number satisfying $C > 1$. Then, we have $\lambda_P(0)=\lambda_f(0)=1$ and
\beas \Lambda_P(z)=\left|P_z(z)\right|+\left|P_{\ol{z}}(z)\right|=\Lambda_f(z/C)\leq \frac{2KM}{(K+1)\left(1-|z|^2/C^2\right)}.\eeas
For $z\in\Bbb{D}$, the right‑hand side is bounded above by its value when $|z|\to1^-$. Thus, we have 
\bea\label{a3}\Lambda_P(z)\leq \frac{2KM}{(K+1)(1-1/C^2)} = \frac{2KM C^2}{(K+1)(C^2-1)}:=M_P(C). \eea
It is easy to see that $M_P(C)\geq \Lambda_P(0)\geq \lambda_P(0)=1$. Thus, we can apply \textrm{Lemma \ref{lem14}} to the scaled function $P$.
Using \textrm{Lemma \ref{lem14}}, we have the mapping $P(z)$ is univalent on $\Bbb{D}_{\rho_0}(C)$ with $\rho_0(C)= 1/M_P(C)$ and $P(\Bbb{D}_{\rho_0}(C))$ contains a schlicht disk of radius
\beas
\sigma_0(C)= M_P(C) + \left(M_P^3(C)-M_P(C)\right)\log\left(1-\frac{1}{M_P^2(C)}\right).\eeas
Consequently, $f(z)=P(Cz)/C$ is univalent in $\Bbb{D}_{r_2(C)}$, where
\beas
r_2(C)= \frac{\rho_0(C)}{C} = \frac{1}{C M_P(C)} = \frac{(K+1)\left(C^2-1\right)}{2KM C^3}. \eeas
Let 
\beas H(x)=\frac{x^2-1}{x^3}= \dfrac{1}{x}-\dfrac{1}{x^3}\quad \text{for}\quad x>1.\eeas
Our aim is to determine the maximum value of $H(x)$ for $x>1$.
Differentiating $H(x)$ with respect to $x$, we obtain 
\beas
H'(x)= -\frac{1}{x^2}+\frac{3}{x^4}= \frac{3-x^2}{x^4}.\eeas
It is easy to see that $H'(x)>0$ for $x\in(1,\sqrt{3})$ and $H'(x)<0$ for $x>\sqrt{3}$. Thus, $H(x)$ attains its maximum at $x=\sqrt{3}\). For $C=\sqrt{3}$, it is easy to see that
\beas M_P(\sqrt{3})=\frac{3KM}{K+1}>1\quad \text{for}\quad K,M\geq 1\eeas
and hence, \textrm{Lemma} \ref{lem14} is applicable. Thus the optimal univalence radius of $f$ is 
\beas
r_2=\max_{c>1} r_2(C)=r_2(\sqrt{3})= \frac{K+1}{2KM}\cdot H(\sqrt{3})=\frac{K+1}{3\sqrt{3} KM}.\eeas
Now we compute the radius of the corresponding schlicht disk radius. 
For $C=\sqrt{3}$, we have 
\beas
\sigma_0(\sqrt{3})=\frac{3KM}{K+1} + \left(\left(\frac{3KM}{K+1}\right)^3 - \frac{3KM}{K+1}\right)\log\!\left(1-\left(\frac{K+1}{3KM}\right)^2\right). \eeas
Since $f(z)=P(Cz)/C$, the image $f(\Bbb{D}_{r_2})$ contains a schlicht disk of radius
\beas
r_3=\frac{1}{\sqrt{3}}\sigma_0(\sqrt{3})= \frac{\sqrt{3}\,KM}{K+1}\left(1+\left(\left(\frac{3KM}{K+1}\right)^2-1\right)
\log\!\left(1-\frac{(K+1)^2}{9K^2M^2}\right)\right).\eeas
This completes the proof.
\end{proof}
\begin{rem}
The univalence radius obtained in \textrm{Theorem F} of Liu and Xu \cite{LX2024} 
is $\rho_5= (K+1)/(8KM)$, whereas \textrm{Theorem \ref{T1}} gives 
$r_2 = (K+1)/(3\sqrt{3}KM)$ without imposing any additional assumptions 
on the mapping. Since
\beas
\frac{r_2}{\rho_5} = \frac{8}{3\sqrt{3}} \approx 1.5396 > 1,
\eeas
our result improves the univalence radius by a factor of approximately $1.54$.
\end{rem}
\begin{rem}
In the proof of Theorem \ref{T1}, one may directly define the scaled function $P(z)=\sqrt{3} f\left(z/\sqrt{3}\right)$ for $z\in\Bbb{D}$,
which gives $\Lambda_P(z) \leq 3KM/(K+1)$.
Applying \textrm{Lemma} \ref{lem14} directly to $P$ gives the same radii without the need for optimization. We have retained the general parameter $C$ in the proof of Theorem \ref{T1} to demonstrate that $C=\sqrt{3}$ is indeed optimal.
\end{rem}
\noindent The numerical values of the univalence radii $r_2$ and the corresponding schlicht disk radii $r_3$ for various choices of $K$ and $M$ are given in Table \ref{tab1}.
\begin{table}[H]
\centering
\begin{tabular}{|c|c|c|c|}
\hline
$K$ & $M$ & $r_2$ & $r_3$ \\
\hline
1 & 1 & 0.3849 & 0.2298 \\
\hline
1.1 & 2 & 0.1837 & 0.0951\\
\hline
1.5 & 1 & 0.3208 & 0.1800 \\
\hline
2 & 1 & 0.2887 & 0.1581 \\
\hline
2 & 2 & 0.1443 & 0.0737 \\
\hline
2 & 2.5 &0.1155 & 0.0585 \\
\hline
3 & 1 & 0.2566 & 0.1377 \\
\hline
3 & 1.5 & 0.1711& 0.0882 \\
\hline
\end{tabular}
\caption{Univalence radii $r_2$ and schlicht disk radii $r_3$ for Theorem \ref{T1} for various values of $K$ and $M$}
\label{tab1}
\end{table}
A visual comparison of the univalence disk and the corresponding schlicht disk for $K=1.5$, $M=1$ are shown in Figure \ref{fig1}, which 
illustrates their relative sizes side by side.
\begin{figure}[H]
\centering
\includegraphics[scale=0.75]{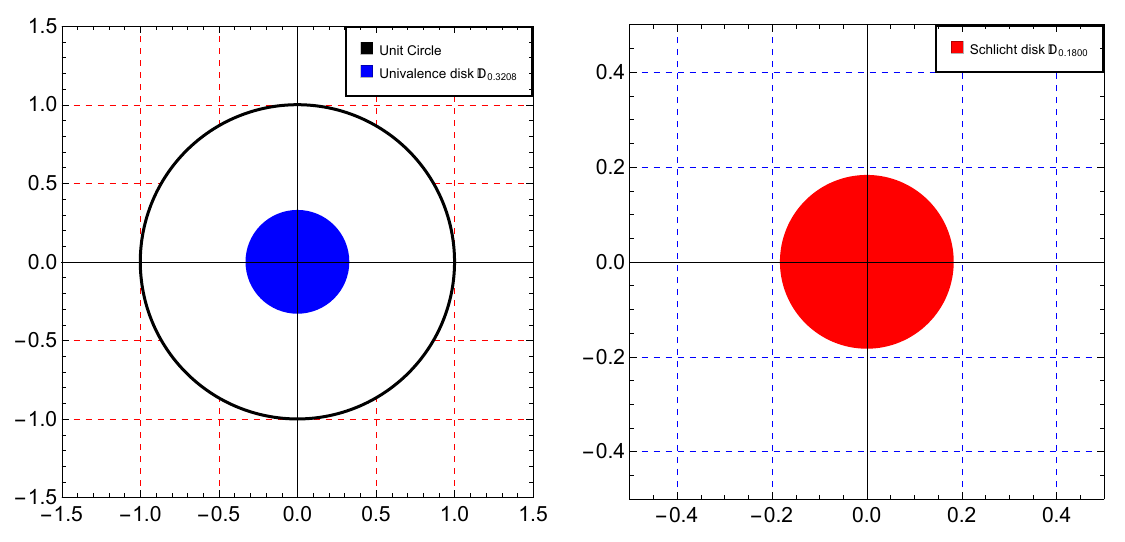}
\caption{Univalence disk $\Bbb{D}_{0.3208}$ and the corresponding schlicht disk $\Bbb{D}_{0.1800}$ for Theorem \ref{T1} with $K=1.5$ and $M=1$.}
\label{fig1}
\end{figure}
\noindent In the following result, we establish a Landau-type theorem for 
$K$-quasiregular harmonic mappings with bounded analytic part 
under the normalization $J_f(0)=1$.
\begin{theo}\label{T6}
Let $f = h+\ol{g}$ be a $K$-quasiregular harmonic mapping in the unit disk $\Bbb{D}$ such that $f(0)=0$ and $J_f(0)=1$. If $|h(z)|\leq M$ for $z\in\Bbb{D}$, then the following hold:
\begin{enumerate}
\item[(i)] If $M > (K+1)/\left(3K^{3/2}\right)$, then $f$ is univalent in the disk $\Bbb{D}_{r_{4}}$ and $f(\Bbb{D}_{r_{4}})$ contains a schlicht disk $\Bbb{D}_{r_{5}}$, where
\beas
r_{4}= \frac{K+1}{3\sqrt{3}\,M K^{3/2}}\;\;\text{and}\;\;r_{5} = \frac{\sqrt{3}M K^{3/2}}{K+1}\left( 1 + \left(\frac{9M^2K^3}{(K+1)^2}- 1\right)\log\!\left(1-\frac{(K+1)^2}{9M^2K^3}\right)\right).
\eeas
\item[(ii)] If $M = (K+1)/\left(3K^{3/2}\right)$, then $f$ is univalent in $\Bbb{D}_{1/\sqrt{3}}$ and $f\left(\Bbb{D}_{1/\sqrt{3}}\right)$ contains a schlicht disk $\Bbb{D}_{1/\sqrt{3}}$.\end{enumerate}
\end{theo}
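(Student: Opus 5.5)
The plan is to reduce Theorem \ref{T6} to the situation of Theorem \ref{T1} by converting the normalization $J_f(0)=1$ into a lower bound for $\lambda_f(0)$, and then to run the same scaling argument with Lemma \ref{lem14}.

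\textbf{Normalization.} Quasiregularity gives $\Lambda_f(0)\leq K\lambda_f(0)$. Combined with $J_f(0)=\Lambda_f(0)\lambda_f(0)=1$, this yields $\lambda_f(0)^2\geq 1/K$, so $\lambda_f(0)\geq 1/\sqrt{K}$.

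I will therefore work with the renormalized map $F=f/\lambda_f(0)$, which satisfies $F(0)=0$ and $\lambda_F(0)=1$. Its analytic part $h/\lambda_f(0)$ is bounded by $M/\lambda_f(0)\leq \sqrt{K}\,M$. As in (\ref{a1})--(\ref{a2}), using Lemma \ref{lem0} on $h$, I get
\[
\Lambda_F(z)\leq \frac{2K\sqrt{K}M}{(K+1)(1-|z|^2)} .
\]

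\textbf{Scaling and Lemma \ref{lem14}.} Next I set $P(z)=\sqrt{3}\,F(z/\sqrt{3})$; the choice $C=\sqrt3$ is the one shown optimal in the proof of Theorem \ref{T1}. Then $\lambda_P(0)=1$ and
\[
\Lambda_P\leq M^*:=\frac{3K^{3/2}M}{K+1}.
\]
The condition $M^*>1$ is exactly hypothesis (i), and $M^*=1$ is hypothesis (ii). Applying Lemma \ref{lem14}(ii) in case (i), $P$ is univalent on $\Bbb{D}_{1/M^*}$ and covers a schlicht disk of radius $\sigma(M^*)=M^*+(M^{*3}-M^*)\log(1-1/M^{*2})$. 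Undoing the scaling, $F$ is univalent on $\Bbb{D}_{1/(\sqrt3 M^*)}=\Bbb{D}_{r_4}$, and $F(\Bbb{D}_{r_4})$ contains a schlicht disk of radius $\sigma(M^*)/\sqrt3$, which is precisely $r_5$.

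\textbf{Main obstacle.} The delicate step is returning from $F$ to $f=\lambda_f(0)F$. Univalence transfers directly. The schlicht disk, however, gets multiplied by $\lambda_f(0)$, and we only know $\lambda_f(0)\geq 1/\sqrt{K}$, not $\lambda_f(0)\geq 1$. The order of the two steps matters:

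\emph{Avoiding a wasted factor.} I should not discard $\lambda_f(0)$ in the bound $M/\lambda_f(0)\leq\sqrt{K}M$ before applying Lemma \ref{lem14}. Instead, I will keep $\mu:=\lambda_f(0)$ general. Then $P$ has bound $3KM/((K+1)\mu)$, giving a univalence radius $(K+1)\mu/(3\sqrt3 KM)\geq r_4$. The schlicht radius for $f$ becomes $\mu\,\sigma\bigl(3KM/((K+1)\mu)\bigr)/\sqrt3$.

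\emph{Monotonicity check.} It remains to check that this quantity is minimized at $\mu=1/\sqrt K$ and that the minimum equals $r_5$. Equivalently, I need $t\mapsto \sigma(t)/t$ to be nonincreasing on $t\geq1$. I expect to verify this by expanding $\log(1-1/t^2)$ in a series: $\sigma(t)/t=1+(t^2-1)\log(1-1/t^2)$, which equals $\sum_{n\ge1} \frac{1}{n(n+1)t^{2n}}$, a decreasing function of $t$. This monotonicity is the main point to confirm.

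\textbf{Case (ii).} Here $M^*=1$, so Lemma \ref{lem14}(i) applies to $P$: it is univalent on $\Bbb{D}$ and covers $\Bbb{D}$. Scaling back gives univalence on $\Bbb{D}_{1/\sqrt3}$ and a schlicht disk of radius $1/\sqrt3$. The same $\mu$-monotonicity remark handles $\lambda_f(0)>1/\sqrt K$; in fact $M^*=1$ forces $\mu=1/\sqrt K$, since otherwise the bound for $P$ would drop below $1=\lambda_P(0)$.
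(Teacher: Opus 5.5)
\paragraph{The repair of the final step fails, and the stated radius $r_5$ is false.}
Your first two steps are the paper's proof: normalize by $\mu=\lambda_f(0)\ge 1/\sqrt K$, rescale by $\sqrt3$, and apply Lemma \ref{lem14}. You are right that returning from $F$ to $f=\mu F$ is the delicate point. The paper passes over it: it writes $f(z)=\lambda_f(0)Q(Cz)/C$ and then records the schlicht radius as $\sigma_1(C)/C$, dropping the factor $\lambda_f(0)\le 1$.

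Your repair breaks at its last line. Write $t_\mu:=3KM/((K+1)\mu)$ for the bound on $\Lambda_P$. At $\mu=1/\sqrt K$ you have $t_\mu=M^*$, so your quantity is
\[
\frac{\mu\,\sigma(M^*)}{\sqrt3}=\frac{1}{\sqrt K}\cdot\frac{\sigma(M^*)}{\sqrt3}=\frac{r_5}{\sqrt K},
\]
not $r_5$. Your series for $\sigma(t)/t$ is correct, so $\mu=1/\sqrt K$ really is the worst case, and the method delivers only $r_5/\sqrt K$.

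There is also a smaller flaw. For $\mu>1/\sqrt K$, using the bound $t_\mu$ places the covered disk inside $f(\Bbb{D}_{\rho_\mu})$ with $\rho_\mu=(K+1)\mu/(3\sqrt3 KM)>r_4$, not inside $f(\Bbb{D}_{r_4})$ as the statement requires. The clean version is to apply Lemma \ref{lem14} to $P$ with the bound $M^*\ge t_\mu$ for every $\mu$. That gives univalence on $\Bbb{D}_{r_4}$ and $\Bbb{D}_{\mu r_5}\subseteq f(\Bbb{D}_{r_4})$, hence $\Bbb{D}_{r_5/\sqrt K}\subseteq f(\Bbb{D}_{r_4})$, with no monotonicity needed.

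The factor $1/\sqrt K$ cannot be removed, because part (i) with $r_5$ is false for every $K\ge 4$. Take
\[
f(z)=\frac{K+1}{2\sqrt K}z+\frac{K-1}{2\sqrt K}\bar{z},\qquad M=\frac{K+1}{2\sqrt K}.
\]
Then $J_f\equiv 1$, $|g'|=\frac{K-1}{K+1}|h'|$ and $|h|\le M$, so all hypotheses hold, including $M>(K+1)/(3K^{3/2})$. Moreover $f(x+iy)=\sqrt K\,x+iy/\sqrt K$. Hence $f(\Bbb{D}_{r_4})$ is an ellipse whose largest disk about $0$ has radius $r_4/\sqrt K$.

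On the other hand, put $t=M^*$ and recall $r_4=1/(\sqrt3 M^*)$. Then
\[
\frac{r_5}{r_4}=t^2\left(1+(t^2-1)\log\left(1-\frac{1}{t^2}\right)\right)=\sum_{n\ge1}\frac{1}{n(n+1)t^{2n-2}}>\frac12\ge\frac{1}{\sqrt K}.
\]
For instance, take $K=9$ and $f(z)=\frac{5}{3}z+\frac{4}{3}\bar{z}$. Then $r_4\approx 0.0428$, and the largest disk about $0$ in $f(\Bbb{D}_{r_4})$ has radius $r_4/3\approx 0.0143$, while $r_5\approx 0.0214$.

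So what your argument (and the paper's) actually proves is univalence on $\Bbb{D}_{r_4}$ with schlicht disk $\Bbb{D}_{r_5/\sqrt K}$, or more precisely $\Bbb{D}_{\lambda_f(0)r_5}$. Your case (ii) drops the same factor. However, (ii) is vacuous: $1=J_f(0)\le |h'(0)|^2\le M^2$ forces $M\ge 1>(K+1)/(3K^{3/2})$.
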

\begin{proof}
Since $f$ is a $K$-quasiregular harmonic mapping in the unit disk $\Bbb{D}$, it follows that $\Lambda_f(z)\leq K\lambda_f(z)$ for $z\in\Bbb{D}$. Thus, we have
\bea\label{Na1} |g'(z)|\leq \frac{K-1}{K+1}|h'(z)|\quad\text{ for}\quad z\in\Bbb{D}.\eea
Since $J_f(0)=1$ and $\Lambda_f(z)\leq K\lambda_f(z)$ for $z\in\Bbb{D}$, it follows that 
\beas \lambda_f(0)=\frac{1}{\Lambda_f(0)}\geq \frac{1}{K\lambda_f(0)},\quad \textit{i.e.,}\quad \lambda_f(0)\geq \frac{1}{\sqrt{K}}.\eeas
Using (\ref{Na1}), we deduce that
\bea\label{Na1a} \Lambda_f(z)=|h'(z)|+|g'(z)|\leq \frac{2K}{K+1}|h'(z)|.\eea
Since $h$ is analytic in $\Bbb{D}$ with $h(0)=0$ and $|h(z)|\leq M$ for $z\in\Bbb{D}$, using the Schwarz Lemma, we have $|h(z)|\leq M|z|$ for $z\in\Bbb{D}$. In view of \textrm{Lemma} \ref{lem0}, we have
\beas
|h'(z)|\leq \frac{M(1-|h(z)/M|^2)}{1-|z|^2}\leq  \frac{M}{1-|z|^2},\quad z\in\Bbb{D}.
\eeas
Therefore, we have
\bea\label{Na2} \Lambda_f(z)\leq \frac{2KM}{(K+1)(1-|z|^2)},\quad z\in\Bbb{D}. \eea
Consider the function $P(z)=f(z)/\lambda_f(0)$ for $z\in\Bbb{D}$. Then, we have $P(0)=0$, $\lambda_P(0)=1$ and
\beas \Lambda_P(z)=\frac{\Lambda_f(z)}{\lambda_f(0)}\leq \frac{2MK^{3/2}}{(K+1)\left(1-|z|^2\right)}.\eeas
Let $Q(z)=C P(z/C)=f(z)/\lambda_f(0)$ for $z\in\Bbb{D}$, where the parameter $C$ is a positive real number satisfying $C > 1$. Then, we have $\lambda_Q(0)=\lambda_P(0)=1$ and
\beas \Lambda_Q(z)=\left|Q_z(z)\right|+\left|Q_{\ol{z}}(z)\right|=\Lambda_P(z/C)\leq \frac{2MC^2K^{3/2}}{(K+1)\left(C^2-|z|^2\right)}.\eeas
For $z\in\Bbb{D}$, the right‑hand side is bounded above by its value when $|z|\to1^-$. Thus, we have 
\beas
\Lambda_Q(z)\leq \frac{2MC^2K^{3/2}}{(K+1)\left(C^2-1\right)}:=M_Q(C). \eeas
It is easy to see that $M_Q(C)\geq \Lambda_Q(0)\geq \lambda_Q(0)=1$. Thus, we can apply \textrm{Lemma \ref{lem14}} to the scaled function $Q$.
In view of \textrm{Lemma \ref{lem14}}, we have the mapping $Q(z)$ is univalent on $\Bbb{D}_{\rho_1(C)}$ with $\rho_1(C)= 1/M_Q(C)$ and $Q(\Bbb{D}_{\rho_1(C)})$ contains a schlicht disk $\Bbb{D}_{\sigma_1(C)}$, where
\beas
\sigma_1(C)=M_Q(C)+(M_Q^3(C)-M_Q(C))\log\!\left(1-\frac{1}{M_Q^2(C)}\right).\eeas
Consequently, $f(z)=\lambda_f(0) Q(Cz)/C$ is univalent on $\Bbb{D}_{r_{4}(C)}$ and $f(\Bbb{D}_{r_{4}(C)})$ contains a schlicht disk of radius $r_{5}(C)=\sigma_1(C)/C$, where
\bea\label{a8}
r_{4}(C)= \frac{\rho_1(C)}{C} = \frac{1}{C M_Q(C)} =\frac{(K+1)\left(C^2-1\right)}{2MC^3K^{3/2}}. \eea
For $x > 1$, it is evident that $(x^2 - 1)/x^3$ attains its maximum at $x = \sqrt{3}$. For $C=\sqrt{3}$, we have 
\beas M_Q(\sqrt{3})=\frac{3MK^{3/2}}{K+1}\geq1.\eeas
Now consider the following cases.\\[2mm]
\textbf{Case 1.} If $M > (K+1)/\left(3K^{3/2}\right)$, then $M_Q(\sqrt{3})> 1$. Then the univalence radius of $f$ is
\beas
r_{4}=\max_{C>1} r_{4}(C)=r_{4}(\sqrt{3})=\frac{K+1}{3\sqrt{3}MK^{3/2}}\eeas
and the image $f(\Bbb{D}_{r_{4}})$ contains a schlicht disk $\Bbb{D}_{r_{5}}$, where
\beas r_{5}=\frac{\sigma_1(\sqrt{3})}{\sqrt{3}}= \frac{\sqrt{3}MK^{3/2}}{K+1}\left( 1 + \left(\frac{9M^2K^3}{(K+1)^2}- 1\right)\log\left(1-\frac{(K+1)^2}{9M^2K^3}\right)\right).\eeas
\textbf{Case 2.} If $M = (K+1)/(3K^{3/2})$, then $M_Q(\sqrt{3}) = 1$. By \textrm{Lemma} \ref{lem14}, $Q$ is univalent in the unit disk $\Bbb{D}$ and $Q(\Bbb{D})$ contains a schlicht disk $\Bbb{D}$. Consequently, $f$ is univalent in $\Bbb{D}_{1/\sqrt{3}}$ and $f\left(\Bbb{D}_{1/\sqrt{3}}\right)$ contains a schlicht disk $\Bbb{D}_{1/\sqrt{3}}$. This completes the proof.
\end{proof}
\noindent The numerical values of the univalence radii $r_{4}$ and the corresponding schlicht disk radii $r_{5}$ for various choices of $K$ and $M$ are given in Table \ref{tab5}.
\begin{table}[H]
\centering
\begin{tabular}{|c|c|c|c|c|}
\hline
$K$ & $(K+1)/(3K^{3/2})$ & $M$ & $r_{4}$ & $r_{5}$ \\
\hline
1 & 0.6667 & 0.7 & 0.5499&0.4586 \\\hline
1 & 0.6667 & 1.0 & 0.3849&0.2297 \\\hline
2 & 0.3536 & 0.4 & 0.5103&0.3752 \\\hline
2 & 0.3536 & 1.0 & 0.2041&0.1066 \\\hline
3 & 0.2566 & 0.3 & 0.4938&0.3493\\\hline
3 & 0.2566 & 0.5 & 0.2963&0.1632\\\hline
4 & 0.2083 & 0.25 & 0.4811&0.3314 \\\hline
\end{tabular}
\caption{Univalence radii $r_{4}$ and schlicht disk radii $r_{5}$ for Theorem \ref{T6} for various values of $K$ and $M$, where $M > (K+1)/(3K^{3/2})$.}
\label{tab5}
\end{table}
\noindent A comparison of the univalence radii obtained in Theorems \ref{T1} and \ref{T6} for $K=2$ is given in Table \ref{tab6}.
\begin{table}[H]
\centering

\begin{tabular}{|c|c|c|c|}
\hline
$M$ & $r_2$ &$r_{4}$ & Ratio $r_2/r_{4}$ \\
\hline
1.0 & 0.2887 & 0.2041 & 1.4142 \\\hline
1.2 & 0.2406 & 0.1701 & 1.4142 \\\hline
1.5 & 0.1925 & 0.1361 & 1.4142 \\\hline
2.0 & 0.1443 & 0.1021 & 1.4142 \\\hline
2.5 & 0.1155 & 0.0816 & 1.4142 \\\hline
\end{tabular}\caption{Comparison of univalence radii in Theorems \ref{T1} and \ref{T6} for $K=2$.}
\label{tab6}
\end{table}
A visual comparison of the univalence disk and the corresponding schlicht disk for $K=2$, $M=0.4$ are shown in Figure \ref{fig5}, which 
illustrates their relative sizes side by side.
\begin{figure}[H]
\centering
\includegraphics[scale=0.75]{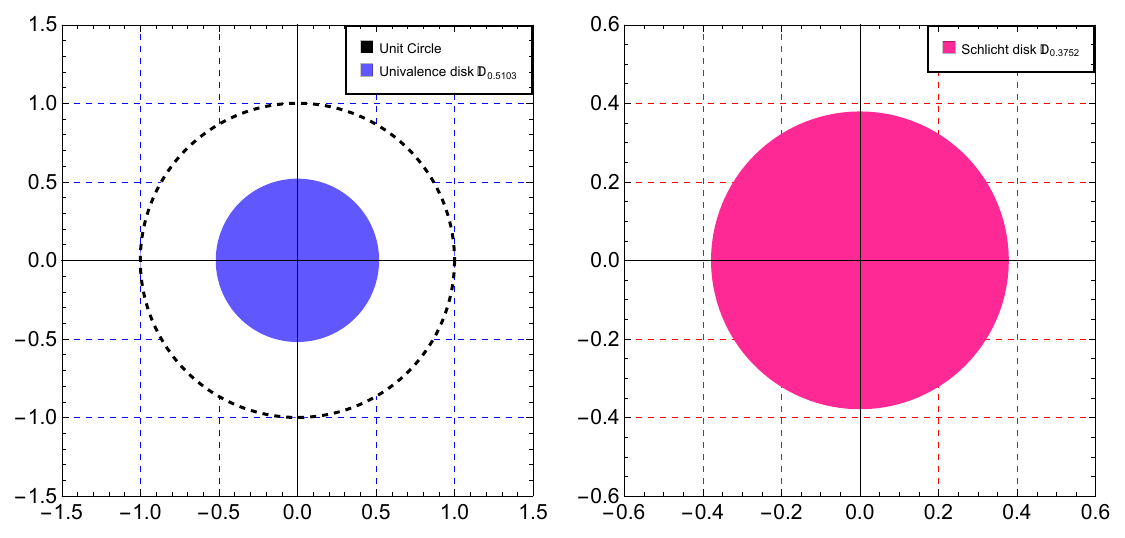}
\caption{Univalence disk $\Bbb{D}_{0.5103}$ and the corresponding schlicht disk $\Bbb{D}_{0.3752}$ for 
Theorem \ref{T6} with $K=2$ and $M=0.4$.}
\label{fig5}
\end{figure}
\noindent In the following result, we establish a Landau-type theorem for a $K$-quasiregular harmonic mapping when the derivative of the analytic part is bounded.
\begin{theo}\label{T2}
Let $f = h + \overline{g}$ be a $K$-quasiregular harmonic mapping on the unit disk $\Bbb{D}$ such that $f(0)=0$ and $\lambda_f(0)=1$. If $|h'(z)|\leq M$ for $z\in\Bbb{D}$, 
then, $M\geq 1$ and the following hold:
\begin{enumerate}
\item[(i)] If $M > 1$ (or if $M=1$ with $K>1$), then $f$ is univalent in the disk $\Bbb{D}_{r_6}$ and $f(\Bbb{D}_{r_6})$ contains a schlicht disk $\Bbb{D}_{r_7}$, where
\beas r_6=\frac{K+1}{2KM}\quad\text{and}\quad r_7= \frac{2KM}{K+1} +\left(\left(\frac{2KM}{K+1} \right)^3 - \frac{2KM}{K+1}\right)\log\!\left(1 - \left(\frac{K+1}{2KM} \right)^2\right).\eeas
The radius $r_6$ is the best possible for $K=1$ with extremal function
\beas
f_0(z) = \int_0^zM\,\frac{1-M z}{M-z}\,dz,\quad M>1.
\eeas
\item[(ii)] If $M=1$ and $K=1$, then $f$ is univalent in the entire unit disk $\Bbb{D}$ and $f(\Bbb{D})$ contains a schlicht disk $\Bbb{D}$.\end{enumerate}
\end{theo}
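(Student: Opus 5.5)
The plan is to reduce everything to Lemma \ref{lem14}, in the same way as in the proofs of Theorems \ref{T0} and \ref{T1}, but now without any rescaling.

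First, from $\Lambda_f\le K\lambda_f$ I obtain, exactly as in (\ref{a1}), the dilatation bound $|g'(z)|\le \frac{K-1}{K+1}|h'(z)|$. Combined with $|h'|\le M$, this gives
\[\Lambda_f(z)=|h'(z)|+|g'(z)|\le \frac{2K}{K+1}|h'(z)|\le \frac{2KM}{K+1}\quad (z\in\Bbb{D}).\]
Since $\lambda_f(0)=|h'(0)|-|g'(0)|=1$, we also get $M\ge |h'(0)|=1+|g'(0)|\ge 1$. Set $M^*:=2KM/(K+1)$. Because $K\ge1$ and $M\ge1$, we have $M^*\ge M\ge1$, and $M^*=1$ exactly when $K=M=1$.

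\textbf{Case (i).} Here $M>1$, or $M=1$ with $K>1$, so $M^*>1$. I apply Lemma \ref{lem14}(ii) with the bound $M^*$. That lemma is stated with a strict inequality $\Lambda_f<M$, and I will handle this by the same convention used in Theorem \ref{T0}; alternatively, apply it with $M^*+\epsilon$ and let $\epsilon\to0$, since univalence on $\Bbb{D}_{1/(M^*+\epsilon)}$ for every $\epsilon>0$ gives univalence on $\Bbb{D}_{1/M^*}$. This yields univalence on $\Bbb{D}_{1/M^*}=\Bbb{D}_{r_6}$. The schlicht radius is $\sigma(M^*)=M^*+(M^{*3}-M^*)\log(1-1/M^{*2})$, which is continuous in $M^*$, so it also survives the limit and equals $r_7$.

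For sharpness when $K=1$, take $f_0$ as in Theorem \ref{T0}. It is analytic, so $g\equiv0$ and $f_0$ is $1$-quasiregular. Moreover $f_0'(0)=1$, and $|f_0'|=M|\phi|\le M$ by the maximum principle. The critical point $f_0'(1/M)=0$ shows univalence fails on any $\Bbb{D}_r$ with $r>1/M=r_6$.

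\textbf{Case (ii).} Here $K=M=1$, so $|g'|\le0$ and $g\equiv0$ (using $g(0)=0$ from $f(0)=0$ normalizations). Then $f=h$ is analytic with $|h'|\le1$ and $|h'(0)|=1$. By the maximum modulus principle $h'\equiv e^{i\theta}$, so $f(z)=e^{i\theta}z$. Alternatively this follows from Lemma \ref{lem14}(i). Either way, $f$ is univalent on $\Bbb{D}$ and $f(\Bbb{D})=\Bbb{D}$.

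The main point needing care is the strict versus non-strict inequality in Lemma \ref{lem14}(ii), which the limiting argument above settles. Everything else is routine.
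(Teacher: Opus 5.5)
Your reduction to Lemma~\ref{lem14} through $\Lambda_f\le 2KM/(K+1)$ is the paper's argument for $r_6$, $r_7$ and part (ii). Your $\epsilon$-limiting step for the strict inequality in Lemma~\ref{lem14}(ii) is a correct tightening that the paper omits.

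\textbf{The gap is in the sharpness step.} You assert that $f_0$ is $1$-quasiregular because it is analytic. However, the paper defines sense-preserving by $J_f>0$ in $\Bbb{D}$, and $J_{f_0}(1/M)=|f_0'(1/M)|^2=0$. The critical point you use to break univalence is exactly what excludes $f_0$ from the class. The paper's own proof uses the same $f_0$ and has the same defect.

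\textbf{Another extremal function will not fix this.} For $K=1$, every admissible $f$ is analytic with $f'\neq 0$ in $\Bbb{D}$, $|f'|\le M$ and $|f'(0)|=1$.
\begin{enumerate}
\item[(a)] With $\alpha=\arg f'(0)$, the function $\psi=e^{-i\alpha}f'/M$ maps $\Bbb{D}$ into $\Bbb{D}\setminus\{0\}$ with $\psi(0)=1/M$.
\item[(b)] The branch of $\log\psi$ with $\log\psi(0)=-\log M$ has negative real part, so it is subordinate to $-(\log M)\frac{1+z}{1-z}$.
\item[(c)] The image of $\Bbb{D}_r$ under that map is a disk centred on the real axis of radius $\frac{2r\log M}{1-r^2}$. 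Hence $|\arg\psi(z)|<\frac{2r\log M}{1-r^2}$ for $|z|<r$.
\item[(d)] At $r=1/M$ this bound equals $t/\sinh t<1<\pi/2$, where $t=\log M>0$.
\item[(e)] So $\textrm{Re}\,\psi>0$ on $\Bbb{D}_{r'}$ for some $r'>1/M$ depending only on $M$. The Noshiro--Warschawski theorem then makes every admissible $f$ univalent on $\Bbb{D}_{r'}$.
\end{enumerate}

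Thus, with the paper's definition $J_f>0$, the radius $r_6=1/M$ is not best possible for $K=1$. The sharpness claim holds only if sense-preserving is relaxed to $J_f\ge 0$, as in the class of Lemma~\ref{lem14}, which imposes no sign condition on $J_f$. Under that reading $f_0$ is admissible and your argument goes through unchanged.
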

\begin{proof}
Since $f$ is a $K$-quasiregular harmonic mapping in the unit disk $\Bbb{D}$, we have $\Lambda_f(z)\leq K\lambda_f(z)$ for $z\in\Bbb{D}$. Thus, we have
\beas |g'(z)|\leq \frac{K-1}{K+1}|h'(z)|\quad\text{ for}\quad z\in\Bbb{D}.\eeas
Using $|h'(z)|\leq M$ for $z\in\Bbb{D}$, we obtain 
\beas \Lambda_f(z)=|h'(z)|+|g'(z)|\leq M + \frac{K-1}{K+1}M = \frac{2K}{K+1}M \quad \text{for}\quad z\in\Bbb{D}.\eeas
Since $f$ is sense-preserving and $\lambda_f(0)=|h'(0)|-|g'(0)|=1$, it follows that $M \geq |h'(0)| = 1 + |g'(0)| \geq 1$.
We now consider two cases.\\[2mm]
\textbf{Case 1.} Let $M > 1$ (or if $M=1$ with $K>1$). Then, we have $2KM/(K+1)>1$ and hence, \textrm{Lemma \ref{lem14}} is applicable. 
In view of \textrm{Lemma \ref{lem14}}, we have the mapping $f(z)$ is univalent on $\Bbb{D}_{r_6}$  with $r_6=(K+1)/(2KM)$ and $f(\Bbb{D}_{r_6})$ contains a schlicht disk of radius
\beas
r_7= \frac{1}{r_6} + \left(\frac{1}{r_6^3}-\frac{1}{r_6}\right)\log\left(1-r_6^2\right).\eeas
\textbf{Case 2.} Let $M=K=1$, then we have $\Lambda_f(z)\leq 1$ for $z\in\Bbb{D}$. By \textrm{Lemma} \ref{lem14}, we have $f$ is univalent in the unit disk $\Bbb{D}$ and $f(\Bbb{D})$ contains a schlicht disk $\Bbb{D}$.\\[2mm]
\indent To prove the sharpness for the case $K=1$, we consider the following holomorphic function
\beas
f_0(z) = \int_0^z M\,\frac{1-M z}{M-z}\,dz,\quad\text{where}\quad M>1.\eeas
It is evident that $f_0$ satisfies $f_0(0)=0$ and $f_0'(0)=1$. Since the M\"{o}bius transformation $(1-Mz)/(M-z)$ has modulus $1$ on $|z|=1$ and is analytic in the unit disk $\Bbb{D}$, by the maximum modulus principle, we have $|f_0'(z)|\leq M$ for $z\in\Bbb{D}$.
Furthermore, $f_0$ has a critical point at $z=1/M<1$ and consequently, $f_0$ cannot be univalent in any disk of radius larger than $1/M$. This completes the proof.
\end{proof}
\begin{rem}
The univalence radius obtained in \textrm{Theorem F} of Liu and Xu \cite{LX2024} is $\rho_5= (K+1)/(8KM)$.
Under the hypothesis $|h'(z)|\leq M$, the new radii are larger by a factor of $4$ for the univalence radius and correspondingly larger for the schlicht disk radius. \end{rem}
\noindent The numerical values of the univalence radii $r_6$ and the corresponding schlicht disk radii $r_7$ for various choices of $K$ and $M$ are given in Table \ref{tab2}.
\begin{table}[H]
\centering

\begin{tabular}{|c|c|c|c|}
\hline
$K$ & $M$ & $r_6$ & $r_7$ \\
\hline
1&1.1&0.9091&0.6955\\\hline
1&1.5&0.6667&0.3979 \\\hline
1 & 2 & 0.5000 & 0.2739 \\\hline
1.1 & 1.01 & 0.9451&0.7751 \\\hline
2&1.1&0.6818&0.4110\\\hline
2 & 2 & 0.3750 & 0.1925 \\\hline
3 & 1.5 & 0.4444& 0.2385 \\\hline
\end{tabular}
\caption{Univalence radii $r_6$ and schlicht disk radii $r_7$ for Theorem \ref{T2} for various values of $K$ and $M$, where $M>1$}
\label{tab2}
\end{table}
\begin{rem} Table \ref{tab2} illustrates that the values of the univalence radius $r_6$ in \textrm{Theorem} \ref{T2} can be made arbitrarily close to $1$. For a fixed  $K>1$ and letting $M\to 1^+$, we have 
\beas r_6 \longrightarrow \frac{K+1}{2K},\eeas
which approaches $1$ as $K\to 1^+$. For example, if we choose $K=1.1$, the limiting value of $r_6$ is approximately $0.9545$, 
covering over $95\%$ of the unit disk. However, the corresponding univalence radius obtained in 
\cite{LX2024} is $(K+1)/(8KM)$, which is uniformly smaller by a factor of exactly $4$. This clearly illustrates the significant advantage of \textrm{Theorem} \ref{T2}.
\end{rem}
A visual comparison of the univalence disk and the corresponding schlicht disk for $K=1.1$, $M=1.01$ are shown in Figure \ref{fig2}, which 
illustrates their relative sizes side by side.
\begin{figure}[H]
\centering
\includegraphics[scale=0.75]{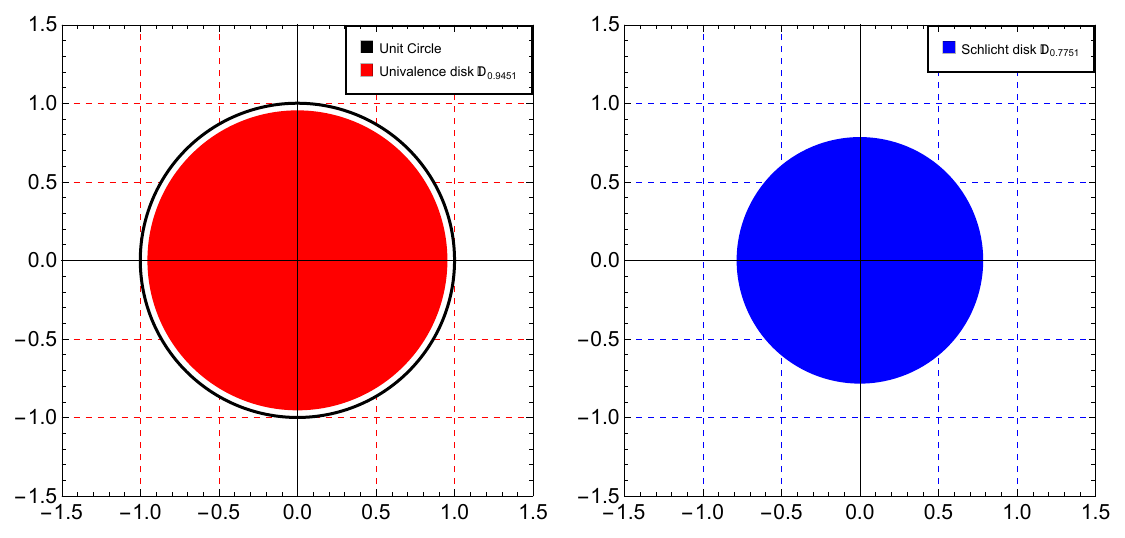}
\caption{Univalence disk $\Bbb{D}_{0.9451}$ and the corresponding schlicht disk $\Bbb{D}_{0.7751}$ for 
Theorem \ref{T2} with $K=1.1$ and $M=1.01$.}
\label{fig2}
\end{figure}
In the following result, we establish a Landau-type theorem for a $K$-quasiregular harmonic mapping under the hypothesis that $\textrm{Re}(h(z))$ is bounded.
\begin{theo}\label{T3}
Let $f=h+\ol{g}$ be a $K$-quasiregular harmonic mapping in the unit disk $\Bbb{D}$ such that $f(0)=0$, $\lambda_f(0)=1$ and $|\textrm{Re}(h(z))| \leq M$ for $z\in\Bbb{D}$, where $0\leq M<1$. Then $f$ is univalent in the disk $\Bbb{D}_{r_8}$ and $f(\Bbb{D}_{r_8})$ contains a schlicht disk $\Bbb{D}_{r_9}$, where
\beas r_8&=&\frac{K+1}{6\sqrt{3}\,K(1+M)}\quad\text{and}\\[2mm] r_9&=&\frac{2\sqrt{3}K(1+M)}{K+1}\left( 1 + \left(\frac{36K^2(1+M)^2}{(K+1)^2}- 1\right)\log\!\left(1-\frac{(K+1)^2}{36K^2(1+M)^2}\right)\right).
\eeas
\end{theo}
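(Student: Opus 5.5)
The plan is to reduce Theorem \ref{T3} to the same scaling argument used in Theorem \ref{T1}, with $M$ there replaced by $2(1+M)$. The only new ingredient is a derivative bound for $h$ coming from the hypothesis $|\textrm{Re}(h)|\leq M$.

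\textbf{Step 1: normalization and the derivative bound.} I assume the standard normalization $g(0)=0$, so that $f(0)=0$ gives $h(0)=0$. If $M=0$, then $\textrm{Re}(h)\equiv 0$ forces $h$ to be constant, hence $h'\equiv 0$ and $g'\equiv 0$ by (\ref{a1}); this contradicts $\lambda_f(0)=1$. So $0<M<1$. The function $h$ maps $\Bbb{D}$ into the strip $S=\{w:|\textrm{Re}\,w|<M\}$, since a nonconstant harmonic function cannot attain its bound inside. The map $\psi(w)=\tan(\pi w/(4M))$ sends $S$ conformally onto $\Bbb{D}$ with $\psi(0)=0$, and $\psi'(w)=\frac{\pi}{4M}(1+\psi(w)^2)$. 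Applying the Schwarz--Pick lemma to $\psi\circ h$ gives
\[
|h'(z)|\,\frac{\pi}{4M}\,\frac{|1+\psi(h(z))^2|}{1-|\psi(h(z))|^2}\leq \frac{1}{1-|z|^2}.
\]
The factor $|1+\psi^2|/(1-|\psi|^2)$ is the hyperbolic density of the strip, and it is at least $1$, with the minimum attained on the real axis. Hence $|h'(z)|\leq \frac{4M}{\pi(1-|z|^2)}$. Since $4M/\pi<2(1+M)$, this yields the (cruder) bound
\[
|h'(z)|\leq \frac{2(1+M)}{1-|z|^2},\quad z\in\Bbb{D}.
\]
Alternatively, one can derive this directly from a Carath\'eodory-type estimate for $1+h/(1+M)$, which has positive real part.

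\textbf{Step 2: bound on $\Lambda_f$.} Combining Step 1 with (\ref{a1a}) gives
\[
\Lambda_f(z)\leq \frac{4K(1+M)}{(K+1)(1-|z|^2)}.
\]
This is exactly (\ref{a2}) with $M$ replaced by $2(1+M)$.

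\textbf{Step 3: scaling and optimization.} Set $P(z)=\sqrt{3}f(z/\sqrt{3})$. Then $\lambda_P(0)=1$ and
\[
\Lambda_P(z)\leq \frac{6K(1+M)}{K+1}=:M_P,
\]
where $M_P>1$ because $6K\geq 3(K+1)$ for $K\geq1$. Lemma \ref{lem14}(ii) then gives univalence of $P$ on $\Bbb{D}_{1/M_P}$, together with a schlicht disk of radius $\sigma=M_P+(M_P^3-M_P)\log(1-1/M_P^2)$. Scaling back via $f(z)=P(\sqrt{3}z)/\sqrt{3}$, the map $f$ is univalent on $\Bbb{D}_{1/(\sqrt{3}M_P)}$. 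This radius equals $r_8=(K+1)/(6\sqrt{3}K(1+M))$, and $f$ covers the schlicht disk of radius $\sigma/\sqrt{3}$. A direct algebraic simplification shows $\sigma/\sqrt{3}=r_9$. As in the proof of Theorem \ref{T1}, the choice $C=\sqrt{3}$ is optimal because $(C^2-1)/C^3$ is maximized at $C=\sqrt{3}$.

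\textbf{Main obstacle.} The only nonroutine point is Step 1: producing a bound of the form $A/(1-|z|^2)$ with $A=2(1+M)$ from the real-part hypothesis. I would use the strip-map Schwarz--Pick estimate above, which in fact gives the sharper constant $4M/\pi$, and then weaken it to match the stated radii. The remaining steps repeat the proof of Theorem \ref{T1} verbatim.
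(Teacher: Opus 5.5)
Your proof is correct. Its skeleton is the same as the paper's: a bound $|h'(z)|\le A/(1-|z|^2)$, then $\Lambda_f\le\frac{2K}{K+1}|h'|$, then the rescaling $P(z)=\sqrt{3}\,f(z/\sqrt{3})$ and Lemma~\ref{lem14}(ii). The difference is how you obtain the derivative bound.

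The paper sets $p=1-h$ and uses $M<1$ to get $1-M\le \textrm{Re}\,p\le 1+M$ with $\textrm{Re}\,p>0$. It then applies the half-plane (Carath\'eodory) estimate $|p'(z)|\le 2\,\textrm{Re}\,p(z)/(1-|z|^2)$, which gives $A=2(1+M)$ directly. You instead apply Schwarz--Pick for the strip $\{|\textrm{Re}\,w|<M\}$. This gives the sharp constant $A=4M/\pi$, with equality at $z=0$ for $h(z)=\frac{2Mi}{\pi}\log\frac{1+z}{1-z}$, the type of function in the paper's own example. You then weaken it to $2(1+M)$.

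The paper's route is more elementary and lands exactly on the stated constants. Yours gives more:
\begin{itemize}
\item it never uses $M<1$;
\item unweakened, it yields the univalence radius $\pi(K+1)/(12\sqrt{3}\,KM)$, larger than $r_8$ by the factor $\pi(1+M)/(2M)>\pi$;
\item combined with $|h'(0)|=1+|g'(0)|\ge 1$, it shows the hypotheses force $M\ge \pi/4$, so the theorem is vacuous for smaller $M$.
\end{itemize}

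There are two slips in Step 1, neither affecting the main argument. First, the factor $|1+\psi^2|/(1-|\psi|^2)$ equals $1/\cos\bigl(\pi\,\textrm{Re}\,h(z)/(2M)\bigr)$. Its minimum $1$ is therefore attained when $h(z)$ lies on the imaginary axis, the centre line of the strip, not on the real axis. The inequality $\ge 1$ itself is fine, by $|1+\psi^2|\ge 1-|\psi|^2$. Second, your suggested alternative via $1+h/(1+M)$ only gives $|h'(z)|\le 2(1+2M)/(1-|z|^2)$, which is too weak to reach $r_8$. You would need the paper's $1-h$, or $1+h/M$, which gives $4M<2(1+M)$.
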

\begin{proof}
Since $f$ is a $K$-quasiregular harmonic mapping in the unit disk $\Bbb{D}$, we have 
\beas |g'(z)|\leq \frac{K-1}{K+1}|h'(z)|\quad\text{ for}\quad z\in\Bbb{D}.\eeas
Therefore, we have
\bea\label{a5} \Lambda_f(z)=|h'(z)|+|g'(z)|\leq \frac{2K}{K+1}|h'(z)|.\eea
Let $p(z)=1-h(z)$. Then, $\textrm{Re}(p(z))=1-\text{Re}(h(z))$. Using $|\textrm{Re}(h(z))| \leq M$ for $z\in\Bbb{D}$, we obtain
\bea\label{a5a}
1-M \leq \textrm{Re}(p(z)) \leq 1+M\quad \text{for}\quad z\in\Bbb{D}.\eea
It follows that $\textrm{Re}(p(z)) \geq 1-M >0$ for $z\in\Bbb{D}$. Thus, $p$ is an analytic function with positive real part and $p(0)=1$. Using the distortion theorem for the Carath\'{e}odory class, we have
\beas
|p'(z)| \leq \frac{2 \,\textrm{Re}(p(z))}{1-|z|^2}\quad \text{for}\quad z\in\Bbb{D}.\eeas
Thus, we have 
\bea\label{a6} |h'(z)| = |p'(z)| \leq \frac{2\,\text{Re} p(z)}{1-|z|^2} \leq \frac{2(1+M)}{1-|z|^2}. \eea
Applying (\ref{a6}) in (\ref{a5}), we obtain
\beas\Lambda_f(z) \leq \frac{2K}{K+1}\cdot\frac{2(1+M)}{1-|z|^2} = \frac{4K(1+M)}{(K+1)(1-|z|^2)}. \eeas
Let $P(z)=C f(z/C)$ for $z\in\Bbb{D}$, where the parameter $C$ is a positive real number satisfying $C > 1$. Then, we have $\lambda_P(0)=\lambda_f(0)=1$ and
\beas \Lambda_P(z)=\left|P_z(z)\right|+\left|P_{\ol{z}}(z)\right|=\Lambda_f(z/C)\leq \frac{4K(1+M)}{(K+1)\left(1-|z|^2/C^2\right)}.\eeas
For $z\in\Bbb{D}$, the right‑hand side is bounded above by its value when $|z|\to1^-$. Thus, we have 
\bea\label{a7}
\Lambda_P(z)\leq \frac{4K(1+M)}{(K+1)(1-1/C^2)} = \frac{4K(1+M)C^2}{(K+1)(C^2-1)} := M_P(C). \eea
It is easy to see that $M_P(C)\geq \Lambda_P(0)\geq \lambda_P(0)=1$. Thus, we can apply \textrm{Lemma \ref{lem14}} to the scaled function $P$.
In view of \textrm{Lemma \ref{lem14}}, we have the mapping $P(z)$ is univalent on $\Bbb{D}_{\rho_2(C)}$ with $\rho_2(C)= 1/M_P(C)$ and $P(\Bbb{D}_{\rho_2(C)})$ contains a schlicht disk of radius
\beas
\sigma_2(C)= M_P(C)+ \left(M_P^3(C)-M_P(C)\right)\log\left(1- \frac{1}{M_P^2(C)}\right).\eeas
Consequently, $f(z)=P(Cz)/C$ is univalent on $\Bbb{D}_{r_{8}(C)}$ and $f(\Bbb{D}_{r_{8}(C)})$ contains a schlicht disk of radius $r_{9}(C)=\sigma_2(C)/C$, where
\bea\label{a8}
r_{8}(C)= \frac{\rho_2(C)}{C} = \frac{1}{C M_P(C)} =\frac{(K+1)(C^2-1)}{4K(1+M)C^3}. \eea
Using similar arguments to those are used in \textrm{Theorem} \ref{T1}, we have the optimal univalence radius
\beas
r_8=\max_{C>1} r_8(C)=r_8(\sqrt{3})=\frac{K+1}{6\sqrt{3}\,K(1+M)}.\eeas
For $C=\sqrt{3}$, we have
\beas
M_P(\sqrt{3}) = \frac{6K(1+M)}{K+1}>1\eeas
and the image $f(\Bbb{D}_{r_8})$ contains a schlicht disk $\Bbb{D}_{r_9}$, where
\beas
r_9= \frac{2\sqrt{3}K(1+M)}{K+1}\left( 1 + \left(\ \frac{36K^2(1+M)^2}{(K+1)^2}- 1\right)\log\left(1-\frac{(K+1)^2}{36K^2(1+M)^2}\right)\right).\eeas
This completes the proof.
\end{proof}
\noindent The numerical values of the univalence radii $r_8$ and the corresponding schlicht disk radii $r_9$ for various choices of $K$ and $M$ are given in Table \ref{tab3}.
\begin{table}[H]
\centering

\begin{tabular}{|c|c|c|c|}
\hline
$K$ & $M$ & $r_8 $ & $r_9$ \\
\hline
1 & 0.0 & 0.1925 & 0.1000 \\\hline
1.1&0.1&0.1670&0.0859\\\hline
1 & 0.5 & 0.1283 & 0.0652 \\\hline
1 & 0.9 & 0.1013 & 0.0512 \\\hline
2 & 0.0 & 0.1443 & 0.0737 \\\hline
2 & 0.5 & 0.0962 & 0.0486 \\\hline
2 & 0.9 & 0.0760 & 0.0382 \\\hline
3 & 0.0 & 0.1283 & 0.0652 \\\hline
\end{tabular}\caption{Univalence radii $r_8$ and schlicht disk radii $r_9$ for Theorem \ref{T3} for various values of $K$ and $M$, where $0 \leq M < 1$}
\label{tab3}
\end{table}
A visual comparison of the univalence disk and the corresponding schlicht disk for $K=1.1$, $M=0.1$ are shown in Figure \ref{fig3}, which 
illustrates their relative sizes side by side.
\begin{figure}[H]
\centering
\includegraphics[scale=0.7]{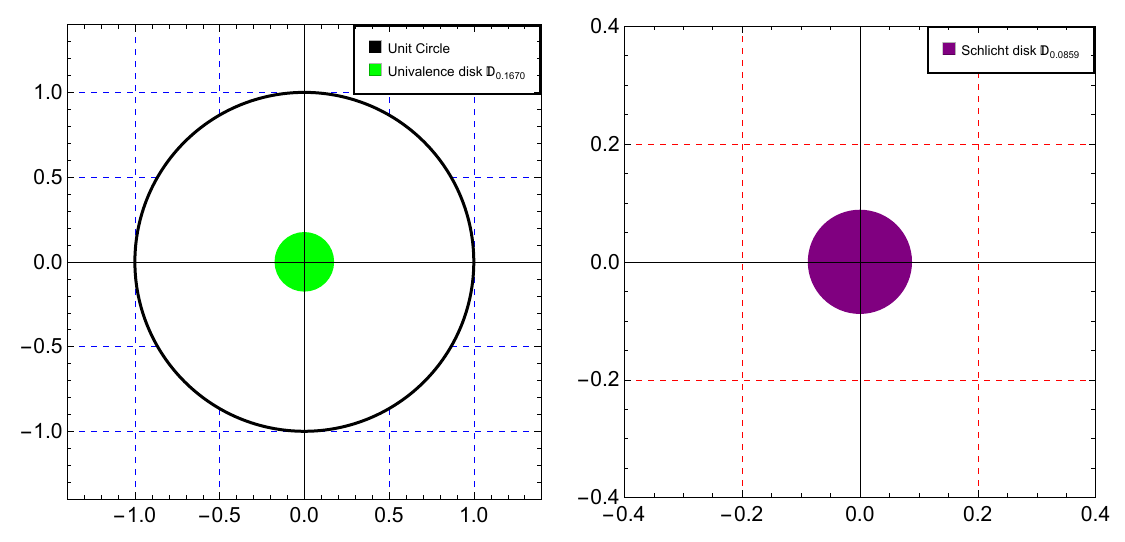}
\caption{Univalence disk $\Bbb{D}_{0.1670}$ and the corresponding schlicht disk $\Bbb{D}_{0.0859}$ for 
Theorem \ref{T3} with $K=1.1$ and $M=0.1$.}
\label{fig3}
\end{figure}
\noindent We now illustrate Theorem \ref{T3} with the following example.
\begin{exam} We construct a harmonic mapping $f = h + \ol{g}$ that satisfies the conditions of \textrm{Theorem} \ref{T3} for $K =3/2$.
We know that $\phi(z)=(1/2)\log ((1+z)/(1-z))$ maps the unit disk $\Bbb{D}$ onto the horizontal strip $\{w\in\Bbb{C}: -\pi/4< Im(w)<\pi/4\}$.
Let
\beas h(z) =a\,i\,\log\frac{1+z}{1-z}\quad\text{for}\quad z\in\Bbb{D},\eeas
where the principal branch of the logarithm is taken so that $\log 1 = 0$.  
Then, $h$ is analytic in $\Bbb{D}$ with $h(0)=0$ and $h'(z) =2ai/\left(1-z^{2}\right)$. Let $g(z) = k\,h(z)$, where $k=(K-1)/(K+1)=1/5$. Then, we have
\beas \lambda_f(0) = |h'(0)| - |g'(0)| = |h'(0)| - k|h'(0)| = 4|h'(0)|/5.\eeas
By setting $\lambda_f(0)=1$ gives $|h'(0)| =5/4$. Furthermore, we have $|h'(0)|=2|a|$ and it follows that $|a|=5/8$. We choose $a=5/8$.
Since $\text{Re}(h(z)) = (5/4)\text{Re}\left(i\phi(z)\right)$, it follows that the mapping $h$ maps the unit disk conformally onto the vertical strip $\{w\in\Bbb{C} : |\text{Re}(w)| <5\pi/16\}$. Therefore,
\beas f(z) = h(z) + \ol{g(z)} = \frac{5}{8}\,i\,\log\frac{1+z}{1-z} + \frac{1}{8}\,i\,\ol{\log\frac{1+z}{1-z}}.\eeas
is a $K$-quasiregular harmonic mapping in $\Bbb{D}$ with $f(0)=0$, $\lambda_f(0)=1$ and $|\text{Re}(h(z))|<M=5\pi/16$ for $z\in\Bbb{D}$.
Thus, $f$ satisfies all conditions of \textrm{Theorem} \ref{T3}. In view of \textrm{Theorem} \ref{T3}, we have that $f$ is univalent in the disk $|z| <r_8$, where
\beas
r_8= \frac{K+1}{6\sqrt{3}\,K(1+M)} = \frac{5/2}{6\sqrt{3}\cdot(3/2)\cdot(1+5\pi/16)} \approx 0.081.\eeas
\end{exam}
\noindent As a special case of Theorem \ref{T3}, we obtain the following result for analytic functions with bounded real part.
\begin{cor}
Let $f$ be an analytic function in $\Bbb{D}$ with $f(0)=0$, $f'(0)=1$, and $|\text{Re}(f(z))| \leq M$ for all $z\in\Bbb{D}$, where $0\leq M<1$. Then $f$ is univalent in $\Bbb{D}_{r_{10}}$ and $f(\Bbb{D}_{r_{10}})$ contains a schlicht disk $\Bbb{D}_{r_{11}}$, where
\beas
r_{10} = \frac{1}{3\sqrt{3}(1+M)}\;\;\text{and}\;\;
r_{11} = \sqrt{3}(1+M)\left(1 + \left(9(1+M)^2 - 1\right)\log\left(1 - \frac{1}{9(1+M)^2}\right)\right).
\eeas
\end{cor}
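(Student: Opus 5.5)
The corollary is the special case $K=1$ of Theorem \ref{T3}, and I plan to deduce it directly from that theorem.

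\textbf{Step 1: recast $f$ as a $1$-quasiregular harmonic mapping.} Write $f=h+\ol{g}$ with $h=f$ and $g\equiv 0$. Then $f_z=f'$ and $f_{\ol z}=0$, so
\[
\Lambda_f=\lambda_f=|f'|.
\]
To see that $f$ is sense-preserving with $J_f=|f'|^2>0$, one needs $f'\neq 0$ throughout $\Bbb{D}$. This is the only point that needs care. I would handle it by noting that the proof of Theorem \ref{T3} never uses sense-preservation beyond $\Lambda_f\le K\lambda_f$ and $\lambda_f(0)=1$. Moreover, Lemma \ref{lem14} is stated for harmonic mappings with only $\lambda_f(0)=1$ and a bound on $\Lambda_f$. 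So the chain of estimates applies verbatim to $K=1$, $g\equiv 0$: the inequality $\Lambda_f\le\lambda_f$ holds trivially as an equality, and $\lambda_f(0)=|f'(0)|=1$.

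\textbf{Step 2: check the remaining hypotheses.} We have $f(0)=0$ and $|\textrm{Re}(h)|=|\textrm{Re}(f)|\le M$ with $0\le M<1$, exactly as Theorem \ref{T3} requires. If one prefers to avoid the sense-preserving subtlety entirely, the argument can be rerun directly. Set $p=1-f$, so that $\textrm{Re}\,p\ge 1-M>0$ and $p(0)=1$. The Carath\'eodory-class estimate then gives
\[
|f'(z)|\le \frac{2(1+M)}{1-|z|^2}.
\]
Scaling with $P(z)=\sqrt{3}\,f(z/\sqrt{3})$ yields $\Lambda_P\le 3(1+M)\ge 3>1$, and Lemma \ref{lem14} then applies to $P$.

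\textbf{Step 3: substitute $K=1$ into the formulas.} Since $(K+1)/K=2$,
\[
r_8=\frac{2}{6\sqrt{3}\cdot 1\cdot(1+M)}=\frac{1}{3\sqrt{3}(1+M)}=r_{10}.
\]
In $r_9$ the prefactor becomes $2\sqrt{3}(1+M)/2=\sqrt{3}(1+M)$, and $36(1+M)^2/(K+1)^2$ becomes $9(1+M)^2$. This gives exactly $r_{11}$.

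The only possible obstacle is the sense-preserving requirement in the definition of $K$-quasiregularity. I expect it to be immaterial, since the univalence conclusion comes from Lemma \ref{lem14}, which does not need it.
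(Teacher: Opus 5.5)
Your proposal is correct and is essentially the paper's proof, which consists only of setting $K=1$ in Theorem \ref{T3}. Your extra care about sense-preservation is justified: an analytic $f$ satisfying these hypotheses can have critical points in $\Bbb{D}$, so it need not be $1$-quasiregular in the paper's strict sense. Your direct rerun via the Carath\'eodory bound, the scaling $P(z)=\sqrt{3}\,f(z/\sqrt{3})$ with $\Lambda_P<3(1+M)$, and Lemma \ref{lem14} (which does not use sense-preservation) settles this point, which the paper's one-line proof passes over.
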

\begin{proof}
The result follows immediately by setting $K=1$ in Theorem \ref{T3}.
\end{proof}
In the following result, we establish a Landau-type theorem for $K$-quasiregular harmonic mappings under the assumption that the analytic part belongs to the Bloch space. 
\begin{theo}\label{T5}
Let $f = h + \overline{g}$ be a $K$-quasiregular harmonic mapping in $\Bbb{D}$ such that $f(0) = 0$, $\lambda_f(0) = 1$. If $h$ belongs to the Bloch space, {\it i.e.},
$\sup_{z\in\Bbb{D}} (1-|z|^2)|h'(z)| \leq B$ for some $B> 0$, then $B\geq 1$, $f$ is univalent in the disk $\Bbb{D}_{r_{12}}$ and $f(\Bbb{D}_{r_{12}})$ contains a schlicht disk $\Bbb{D}_{r_{13}}$, where
\beas
r_{12}= \frac{K+1}{3\sqrt{3}\,K B}\quad \text{and}\quad r_{13}=\frac{\sqrt{3}K B}{K+1}\left(1 + \left(\frac{9K^2 B^2}{(K+1)^2} - 1\right)\log\left(1 - \frac{(K+1)^2}{9K^2 B^2}\right)\right).\eeas
\end{theo}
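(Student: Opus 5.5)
The plan follows the proof of Theorem \ref{T1} almost verbatim; the only change is that the Bloch hypothesis replaces the Schwarz--Pick estimate (\ref{a2a}).

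\textbf{Step 1: the coefficient inequality.} Since $f$ is $K$-quasiregular, $\Lambda_f\le K\lambda_f$. As in (\ref{a1}) this gives
\[
|g'(z)|\le \frac{K-1}{K+1}|h'(z)| ,
\]
and hence
\[
\Lambda_f(z)\le \frac{2K}{K+1}|h'(z)| .
\]

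\textbf{Step 2: the bound on $\Lambda_f$.} The Bloch assumption gives $|h'(z)|\le B/(1-|z|^2)$ directly, so
\[
\Lambda_f(z)\le \frac{2KB}{(K+1)(1-|z|^2)},\qquad z\in\Bbb{D}.
\]
This is exactly (\ref{a2}) with $M$ replaced by $B$.

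\textbf{Step 3: the inequality $B\ge 1$.} Evaluating the Bloch bound at $z=0$ gives $|h'(0)|\le B$. Sense-preservation together with $\lambda_f(0)=1$ gives $|h'(0)|=1+|g'(0)|\ge 1$. Combining the two, $B\ge 1$.

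\textbf{Step 4: rescaling and Lemma \ref{lem14}.} Set $P(z)=\sqrt{3}\,f(z/\sqrt{3})$, as suggested by the remark after Theorem \ref{T1}. One could keep a general $C>1$ and maximize $(C^2-1)/C^3$, but the maximum is at $C=\sqrt3$. Then $P(0)=0$ and $\lambda_P(0)=1$. For $z\in\Bbb{D}$,
\[
\Lambda_P(z)=\Lambda_f(z/\sqrt3)\le \frac{2KB}{(K+1)(1-1/3)}=\frac{3KB}{K+1}=:M_P .
\]
Since $K,B\ge1$, we have $M_P\ge 3/2>1$, so Lemma \ref{lem14}(ii) applies. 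Strictly, the lemma is stated with the strict inequality $\Lambda_P<M_P$. This holds because $|z|<1$ gives $1-|z|^2/3>2/3$, so the displayed bound is strict in the open disk. Lemma \ref{lem14}(ii) then says $P$ is univalent on $\Bbb{D}_{1/M_P}$, and its image contains a schlicht disk of radius $\sigma=M_P+(M_P^3-M_P)\log(1-1/M_P^2)$.

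\textbf{Step 5: undoing the scaling.} Since $f(z)=P(\sqrt3 z)/\sqrt3$, the map $f$ is univalent on $\Bbb{D}_{r_{12}}$ with
\[
r_{12}=\frac{1}{\sqrt3\,M_P}=\frac{K+1}{3\sqrt3\,KB}.
\]
The schlicht disk for $f$ has radius $r_{13}=\sigma/\sqrt3$. Factoring out $M_P/\sqrt3=\sqrt3KB/(K+1)$ and writing $M_P^2=9K^2B^2/(K+1)^2$ yields exactly the stated formula for $r_{13}$.

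\textbf{Main obstacle.} There is no real obstacle. The only point needing care is the applicability of Lemma \ref{lem14}, namely that $M_P>1$ and that the bound is strict, both handled in Step 4.
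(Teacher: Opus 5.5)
Your proof is correct and follows the paper's argument: the quasiregularity bound $\Lambda_f\le \frac{2K}{K+1}|h'|$, the Bloch estimate $|h'(z)|\le B/(1-|z|^2)$, the inequality $B\ge|h'(0)|=1+|g'(0)|\ge 1$, then rescaling with $C=\sqrt3$ and applying Lemma \ref{lem14}. Fixing $C=\sqrt3$ directly instead of optimizing over $C$ is the shortcut noted in the remark after Theorem \ref{T1}, and your check that $\Lambda_P<3KB/(K+1)$ holds strictly, as Lemma \ref{lem14}(ii) requires, is a small gain in rigor over the paper.
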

\begin{proof}
Since $f$ is a $K$-quasiregular harmonic mapping in the unit disk $\Bbb{D}$, we have
\beas |g'(z)|\leq \frac{K-1}{K+1}|h'(z)|\quad\text{and}\quad \Lambda_f(z)=|h'(z)|+|g'(z)|\leq \frac{2K}{K+1}|h'(z)|\quad \text{ for}\quad z\in\Bbb{D}.\eeas
From the Bloch condition, we have
\beas
|h'(z)| \leq \frac{B}{1-|z|^2} \quad \text{for all } z \in \Bbb{D}.\eeas
Thus, we have 
\beas
\Lambda_f(z) \leq \frac{2K}{K+1} \cdot \frac{B}{1-|z|^2} = \frac{L}{1-|z|^2},
\eeas
where $L =2K B/(K+1)$. Since $f$ is sense-preserving and $\lambda_f(0)=|h'(0)|-|g'(0)|=1$, it follows that 
\beas  B \geq |h'(0)| = 1 + |g'(0)| \geq 1.\eeas
Using similar arguments to those used in the proof of \textrm{Theorem} \ref{T3}, we have $f(z)$ is univalent in $\Bbb{D}_{r_{12}}$, where
\beas
r_{12} =\frac{K+1}{3\sqrt{3}K B}.
\eeas
The image $f(\Bbb{D}_{r_{12}})$ contains a schlicht disk of radius
\beas
r_{13}= \frac{\sqrt{3}K B}{K+1}\left(1 + \left(\frac{9K^2 B^2}{(K+1)^2} - 1\right)\log\left(1 - \frac{(K+1)^2}{9K^2 B^2}\right)\right).
\eeas
This completes the proof.
\end{proof}
The following example illustrates the applicability of \textrm{Theorem} \ref{T5}.
\begin{exam}\label{ex1}
Let 
\beas
h(z) = \log\left(\frac{1+z}{1-z}\right), \qquad z \in \Bbb{D},
\eeas
where the principal branch of the logarithm is chosen so that $\log 1 = 0$. Then $h$ is analytic in $\Bbb{D}$ with $h(0)=0$ and $h'(z) = 2/(1-z^2)$.
It is evident that
\beas
(1-|z|^2)|h'(z)|= (1-|z|^2)\left|\frac{2}{1-z^2}\right|\leq \frac{2(1-|z|^2)}{1-|z|^2} = 2.\eeas
Thus, $h$ belongs to the Bloch space with $B= 2$.
Let $g(z) = k h(z)$, where $k =(K-1)/(K+1)$ for some $K \geq 1$. Then, $f = h + \ol{g}$ is a $K$-quasiregular harmonic mapping in $\Bbb{D}$.
It is easy to see that $f(0)=0$ and $\lambda_f(0) = |h'(0)| - |g'(0)| = 2(1-k) =4/(K+1)$.
To ensure $\lambda_f(0)=1$, we take $K = 3$, which implies $k = 1/2$. Therefore,
\beas
f(z) = \log\left(\frac{1+z}{1-z}\right) + \frac{1}{2}\overline{\log\left(\frac{1+z}{1-z}\right)}.
\eeas
is a $3$-quasiregular harmonic mapping with $f(0)=0$, $\lambda_f(0)=1$ and $h$ is in the Bloch space with $B=2$.
Using \textrm{Theorem} \ref{T5} with $K=3$ and $B=2$, we have that $f$ is univalent in $\Bbb{D}_{r_{12}}$, where
\beas
r_{12}= \frac{K+1}{3\sqrt{3}\,K B} = \frac{2}{9\sqrt{3}} \approx 0.128.
\eeas
The image $f(\Bbb{D}_{r_{12}})$ contains a schlicht disk $\Bbb{D}_{r_{13}}$, where
\beas
r_{13} &=& \frac{\sqrt{3}K B}{K+1}\left(1 + \left(\frac{9K^2 B^2}{(K+1)^2} - 1\right)\log\left(1 - \frac{(K+1)^2}{9K^2 B^2}\right)\right) \\
&=& \frac{3\sqrt{3}}{2}\left(1 + \frac{77}{4}\log\left(\frac{77}{81}\right)\right) \\
&\approx&0.065.
\eeas
\end{exam}
\begin{rem}
The \textrm{Example} \ref{ex1} shows that the Bloch-space condition is genuinely more general than the hypotheses of Theorems \ref{T1}, \ref{T2}, and \ref{T3}. Indeed, for the function
\beas
h(z)=\log\left(\frac{1+z}{1-z}\right),
\eeas
we have the following:
\begin{itemize}
\item[(i)] $h(z)$ is unbounded in $\Bbb{D}$, so Theorem \ref{T1}  does not apply;
\item[(ii)] $h'(z)$ is unbounded in $\Bbb{D}$, so Theorem \ref{T2} does not apply;
\item[(iii)] $\text{Re}(h(z))$ is unbounded above in $\Bbb{D}$, so Theorem \ref{T3} does not apply.
\end{itemize}
This example shows that the Bloch-space condition is strictly weaker than the hypotheses of \textrm{Theorems} \ref{T1}, \ref{T2} and \ref{T3}. Indeed, although $h$ is unbounded and fails the assumptions of those theorems, it satisfies the Bloch-space condition with $B=2$, allowing \textrm{Theorem} \ref{T5} to give explicit radii. This demonstrates the advantage of considering the Bloch-space setting.
\end{rem}
\section*{Declarations}
\noindent
{\bf Conflict of Interest:} The authors declare that there are no conflicts of interest regarding the publication of this paper.\\[1mm]
{\bf Availability of data and materials:} Not applicable.\\[1mm]
{\bf Authors' contributions:} All authors contributed equally to the investigation of the problem, and all authors have read and approved the final manuscript.

\end{document}